  \let\original@@tocwrite=\@tocwrite
  \newif\ifAHVflag
  \def\jlreq@uniqtoken{\jlreq@uniqtoken}
  \def\jlreq@endmark{\jlreq@endmark}
  \long\def\jlreq@getfirsttoken#1#{\jlreq@getfirsttoken@#1\bgroup\jlreq@endmark}
  \long\def\jlreq@getfirsttoken@#1#2\jlreq@endmark#3\jlreq@endmark{#1}
  \renewcommand{\@tocwrite}[2]{%
    \begingroup
      \AHVflagfalse % true if section is not empty
      \@ifempty{#2}{}{%
        \expandafter\expandafter\expandafter\ifx\jlreq@getfirsttoken#2\jlreq@uniqtoken{}\jlreq@endmark\Sectionformat\expandafter\@firstoftwo\else\expandafter\@secondoftwo\fi
        {%
          \def\Sectionformat##1##2{\@ifempty{##1}{}{\AHVflagtrue}}%
          #2
        }{\AHVflagtrue}%
       }%
      \def\@tempa{}%
      \ifAHVflag\def\@tempa{\original@@tocwrite{#1}{#2}}\fi
    \expandafter\endgroup
    \@tempa
  }%
\newcommand{\charf}{\operatorname{char}}
\newcommand{\Hom}{\operatorname{Hom}}
\newcommand{\Ind}{\operatorname{Ind}}
\newcommand{\ind}{\operatorname{ind}}
\newcommand{\End}{\operatorname{End}}
\DeclareMathOperator{\SL}{SL}
    \DeclareMathOperator{\Sym}{Sym}
   \DeclareMathOperator{\nrd}{nrd}
\theoremstyle{plain} 
\newtheorem{theorem}{Th\'eor\`eme}[section]
\newtheorem{corollary}[theorem]{Corollaire}
\newtheorem{lemma}[theorem]{Lemme}
\newtheorem{proposition}[theorem]{Proposition}
\newtheorem{definition-proposition}[theorem]{Definition-Proposition}
\newtheorem{variant}[theorem]{Variante}
\theoremstyle{definition}
\newtheorem{definition}[theorem]{D\'efinition}
\theoremstyle{remark}
\newtheorem{remark}[theorem]{Remarque}
\numberwithin{equation}{section}
\title{ Repr\'esentations  des quaternions de norme $1$  }
  \author{Guy Henniart and  Marie-France Vign\'eras  
}
 \date{\today}
\begin{document}

\begin{abstract} Let $p$ be a prime number, $F$ a local field with
finite residue field of characteristic $p$, $D$ the quaternion division algebra with centre $F$, and $R$   an 
algebraically  closed field of any characteristic $\charf_R$. We classify the smooth irreducible $R$-representations $\pi$
of the group $D^1$ of elements of $D^*$ with reduced norm $1$ to $F$. Such a $\pi $ occurs in the restriction of a smooth irreducible
$R$-representation $\Pi$ of $D^*$. When the dimension of $\Pi$ is $>1$,  following our previous work
in the case of $SL_2(F)$, we show that the restriction of $\Pi$ to $D^1$ is irreducible or
 the sum of two irreducible   representations.  When $\charf_R \neq p$, that restriction  is the sum of two irreducible  equivalent representations     if and only if  the representation   of $GL_2(F)$ corresponding to $\Pi$ via the 
Jacquet-Langlands correspondence restricts to $SL_2(F)$ as a sum of four inequivalent irreducible representations (this  is never the case  if $\charf_R \neq 2$).
  \end{abstract}

   \maketitle
 \def\contentsname{Plan}
\setcounter{tocdepth}{2}  
\tableofcontents
 \section{Introduction}
  Cette note est un compl\'ement \`a \cite{HV25}. Nous fixons un nombre premier $p$ et un corps  local non archim\'edien $F$ de caract\'eristique r\'esiduelle $p$. Nous fixons \'egalement un corps
  alg\'ebriquement clos $R$. Lorsque la caract\'eristique $\charf_R$ de $R$ n'est pas $p$, dans loc.cit., nous avons classifi\'e les $R$-repr\'esentations  lisses irr\'eductibles de $SL_2(F)$, en utilisant qu'elles apparaissent dans la restriction de $R$-repr\'esentations lisses irr\'eductibles de $GL_2(F)$. Dans le pr\'esent article, nous consid\'erons un corps de quaternions $D$ de centre $F$, $\nrd:D^*\to F^*$ la norme r\'eduite et le groupe $D^1$ form\'e des \'el\'ements  dont la norme r\'eduite   vaut $1$. C'est une forme int\'erieure non d\'eploy\'ee de $SL_2(F)$, unique \`a isomorphisme pr\`es. Nous classifions les $R$-repr\'esentations lisses
 irr\'eductibles de $D^1$. Elles apparaissent dans la restriction des $R$-repr\'esentations lisses irr\'eductibles de $D^*$, sans hypoth\`ese sur $\charf_R$.
 Notre r\'esultat principal est le suivant.
 
 \begin{theorem} 1)  La restriction   \`a $D^1$ d'une $R$-repr\'esentation lisse irr\'eductible $\Pi$ de $D^*$ de dimension $>1$ est irr\'eductible ou somme de deux repr\'esentations irr\'eductibles.
 
 2) Si  $\charf_R=p$,    les $R$-repr\'esentations lisses irr\'eductibles de  $D^1$ sont les $R$-caract\`eres de $D^1$.  Ces caract\`eres  forment un groupe cyclique d'ordre $q+1$ o\`u $q$ est le cardinal du corps r\'esiduel de $F$.
  
3) Si  $\charf_R\neq p$, nous notons $JL(\Pi)$  la $R$-repr\'esentation irr\'eductible de $GL_2(F)$ correspondant \`a $\Pi $ par la correspondance de Jacquet-Langlands, avec $R$ comme corps de coefficients. Alors   la restriction de $JL(\Pi)$ \`a $SL_2(F)$ est somme de $d_\Pi$  repr\'esentations irr\'eductibles non \'equivalentes, o\`u  $d_\Pi$  est la dimension de  l'alg\`ebre  des endomorphismes de $\Pi|_{D^1}$
  \end{theorem}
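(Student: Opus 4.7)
On traite les trois assertions successivement; (1) et (3) reposent sur la théorie de Clifford pour l'inclusion normale $D^1 \triangleleft D^\times$, dont le quotient $D^\times/D^1 \cong F^\times$ est abélien via $\nrd$, et (3) combine ceci avec la correspondance de Jacquet--Langlands et l'étude du cas $SL_2(F)\subset GL_2(F)$ effectuée dans \cite{HV25}.

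\textbf{Partie (2).} Tout élément de $D^1$ a valuation nulle, donc $D^1 \subset \mathcal{O}_D^\times$. Le sous-groupe $U := (1+\mathfrak{p}_D) \cap D^1$ est ouvert, distingué et pro-$p$ dans $D^1$; le quotient $D^1/U$ s'identifie au noyau de la norme $\mathbb{F}_{q^2}^\times \to \mathbb{F}_q^\times$, groupe cyclique d'ordre $q+1$ (premier à $p$). Comme une $R$-représentation lisse d'un pro-$p$-groupe en caractéristique $p$ possède des vecteurs invariants non nuls, toute $R$-représentation lisse irréductible de $D^1$ est triviale sur $U$ et se factorise en un caractère de $D^1/U$, d'où la description voulue.

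\textbf{Partie (1).} La théorie de Clifford fournit une décomposition $\Pi|_{D^1} = e\bigoplus_{i=1}^n \sigma_i$ avec les $\sigma_i$ deux à deux non isomorphes et formant une orbite sous $D^\times$. L'algèbre $\End_{D^1}(\Pi) \simeq \bigoplus_i M_e(R)$, de dimension $ne^2$, s'identifie à une algèbre de groupe tordue $R^c[X(\Pi)]$ où
\[ X(\Pi) := \{\chi \in \widehat{F^\times/F^{\times 2}}\ :\ \Pi \otimes (\chi \circ \nrd) \simeq \Pi\}, \]
la contrainte $\chi^2=1$ venant du caractère central de $\Pi$; ainsi $|X(\Pi)| = ne^2$. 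La majoration $|X(\Pi)| \leq 4$ résulte, en caractéristique $\neq p$, de la correspondance $JL$ et du fait classique que pour une supercuspidale de $GL_2(F)$ (cas $\dim\Pi > 1$), le groupe des caractères quadratiques qui la stabilisent par torsion a ordre $\leq 4$; en caractéristique $p$, une analyse directe à partir du quotient fini $\mathcal{O}_D^\times/(1+\mathfrak{p}_D) \cong \mathbb{F}_{q^2}^\times$ force $\dim\Pi \leq 2$. Pour conclure $ne \leq 2$, il reste à exclure le cas $(n,e)=(4,1)$ lorsque $|X(\Pi)|=4$, en montrant que le cocycle $c$ n'est pas un cobord (de sorte que $R^c[X(\Pi)] \simeq M_2(R)$, forçant $(n,e)=(1,2)$). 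C'est le principal obstacle: l'idée est que les trois caractères quadratiques non triviaux de $X(\Pi) \simeq (\Z/2)^2$ correspondent par théorie du corps de classes à trois extensions quadratiques de $F$ plongées dans $D$, et la classe de $c$ dans $H^2((\Z/2)^2, R^\times)$ s'identifie à la classe de Brauer non triviale de $D$; c'est précisément l'anisotropie de $D$ qui distingue notre situation du cas déployé $SL_2(F) \subset GL_2(F)$ de \cite{HV25}, où le cocycle analogue est trivial et produit l'alternative opposée.

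\textbf{Partie (3).} En caractéristique $\neq p$, la partie (1) donne $d_\Pi = ne^2 = |X(\Pi)|$. La correspondance $JL$ étant compatible à la torsion par un caractère de $F^\times$, on a $|X(\Pi)| = |X(JL(\Pi))|$. L'analyse $SL_2(F) \subset GL_2(F)$ de \cite{HV25} identifie $\dim \End_{SL_2(F)}(JL(\Pi))$ à $|X(JL(\Pi))|$ et établit que $JL(\Pi)|_{SL_2(F)}$ est sans multiplicité; le nombre de ses composants irréductibles non équivalents vaut donc $d_\Pi$. L'observation complémentaire du résumé -- le cas $d_\Pi = 4$ ne se produit pas si $\charf_R \neq 2$ -- se lit sur la dichotomie $R^c[X(\Pi)] \simeq R^4$ versus $M_2(R)$, combinée à la classification des supercuspidales de type "triangle", dont l'existence est contrainte par la caractéristique du corps de coefficients.
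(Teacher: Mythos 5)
Your proposal follows the same general strategy as the paper for parts (2) and (3) and for the high-level structure of (1), but it has two genuine gaps in part (1) that the paper handles by substantially different means.

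\textbf{The characteristic $2$ case is missing.} Your identification of $\End_{RD^1}(\Pi)$ with a twisted group algebra $R^c[X(\Pi)]$, and the equality $d_\Pi = ne^2 = |X(\Pi)|$, rest on decomposing $\ind_{F^*D^1}^{D^*}\mathbf{1}$ as a direct sum of $\chi\circ\nrd$ over quadratic characters $\chi$ of $F^*$. This requires $\charf_R \neq 2$ (the quotient $D^*/F^*D^1 \cong F^*/(F^*)^2$ is a $2$-group, and semisimplicity of $\ind\mathbf{1}$ fails in characteristic $2$; moreover, if $\charf_R = 2$ there are no nontrivial quadratic $R$-characters at all, which would wrongly give $d_\Pi = 1$). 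The paper treats $\charf_R = 2$, $p$ odd separately in \S\ref{ss:direct}: it uses the explicit type-theoretic construction $\Pi \simeq \ind_J^{D^*}\lambda$ for a compact-mod-centre subgroup $J$, computes $[D^*:JD^1] = 2$ directly, and deduces (Lemme \ref{le:srm}) that $\Pi|_{D^1}$ is a sum of two inequivalent irreducibles. No twisted group algebra appears there, and you would need an analogous direct argument.

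\textbf{The exclusion of $(n,e)=(4,1)$ is asserted, not proved.} You correctly identify this as the crux and propose showing that the $2$-cocycle $c$ on $X(\Pi)\cong(\Z/2)^2$ is nontrivial by relating its class in $H^2((\Z/2)^2, R^*)$ to the Brauer class of $D$. This is an attractive heuristic (and indeed close in spirit to the Labesse--Langlands stable theory), but it is not a proof: you give no computation of the cocycle, no explanation of why its class coincides with the invariant of $D$, and the statement that the analogous cocycle for $SL_2(F)\subset GL_2(F)$ ``est trivial'' is itself a deep fact, not an observation. The paper avoids this entirely. For $\charf_R \neq 2, p$ it first shows $d_\Pi = 1,2,4$ by transferring to $SL_2(F)$ via the compatibility of the Langlands/Jacquet--Langlands correspondences with twisting (equation \eqref{eq:QGG}) and \cite[Theorem 1.1]{HV25}, then quotes the classical result of Labesse--Langlands \cite{L71,LL79,L24} for $R=\mathbb C$ to rule out four inequivalent constituents, and finally transports that conclusion to $\mathbb Q_\ell^{ac}$, to $\mathbb F_\ell^{ac}$ (via lifting and \cite[Theorem 4.23]{HV25}), and to arbitrary $R$ of characteristic $\neq 2$ by descent to the prime-field algebraic closure. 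Your cocycle computation, if carried out, could replace the citation of Labesse--Langlands and would be a genuinely more self-contained route; as written it is only an idea.

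Parts (2) and (3) of your argument are essentially the paper's, with the same caveat that (3) for $\charf_R = 2$ again needs the explicit construction of \S\ref{ss:direct} rather than the character-counting identification.
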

 
   L'alg\`ebre $\End_{RD^1}\Pi$ des endomorphismes de $\Pi|_{D^1}$ est de dimension $d_\Pi=1$ si $\Pi|_{D^1}$ est irr\'eductible,  $d_\Pi=2$   si $\Pi|_{D^1}$ a deux  composants irr\'eductibles  non \'equivalents,  $d_\Pi=4$    si $\Pi|_{D^1}$ a deux composants irr\'eductibles   \'equivalents.

Lorsque  $\charf_R\neq p$,  la diff\'erence avec le cas de $SL_2(F)$ r\'eside dans le cas o\`u les deux repr\'esentations irr\'eductibles de $D^1$ dans $\Pi|_{D^1}$ sont \'equivalentes. 
Ci-apr\`es, nous fixons les notations et \'enon\c{c}ons quelques rappels dont nous aurons besoin, puis passons directement \`a la d\'emonstration du Th\'eor\`eme. Elle se fait en deux temps, au \S \ref{pre}.

Si  $\charf_R\neq 2, p$,  $d_\Pi $   est le nombre des $R$-caract\`eres lisses $\chi$ de  $F^*$ tels que la torsion par $\chi \circ  \nrd$  stabilise $\Pi$. On utilisera alors que la correspondance de Langlands qui associe
\`a $\Pi$ une $R$-repr\'esentation irr\'eductible lisse  de dimension $2$ du groupe de Weil absolu $W_F$ de $F$ est compatible \`a la torsion par les caract\`eres, ce qui permet d'utiliser les r\'esultats de loc.cit..
 
 Si  $\charf_R=2$ ou $p$,  nous utiliserons  la construction explicite de $\Pi$ (voir \cite{V87}).   Dans ce cas, nous verrons   en \S \ref{ss:31}, 
que $d_\Pi=2$ sauf si 
 $\charf_R=p$, $p$ est impair et $\Pi$ est induite d'un $R$-caract\`ere $\lambda$ de $F^*U_D $ de restriction \`a $D^1$ 
 l'unique $R$-caract\`ere d'ordre $2$,  o\`u $U_D$ est le groupe des unit\'es de l'anneau des entiers de $D$.

\bigskip Soit $\ell$ un nombre premier. Toute $\mathbb Q_\ell^{ac}$-repr\'esentation\footnote{On note $X^{ac}$  une cl\^oture alg\'ebrique d'un corps commutatif $X$} lisse irr\'eductible de $D^1$ est enti\`ere car $D^1$ est compact. La r\'eduction  modulo $\ell$ est aussi simple que possible pour les repr\'esentations de   $D^1$. Nous prouverons au \S \ref{s:red}:

\begin{theorem} 1) La r\'eduction modulo $\ell$ d'une $\mathbb Q_\ell ^{ac}$-repr\'esentation  irr\'eductible  lisse  de $D^1$ est toujours irr\'eductible (ce n'est pas vrai pour $D^*$ et si $\ell \neq p$ pour $GL_2(F), SL_2(F)$).
 
2) Toute $\mathbb F_\ell ^{ac}$-repr\'esentation  irr\'eductible   lisse  de $D^1$  est la r\'eduction modulo $\ell$ d'une $\mathbb Q_\ell ^{ac}$-repr\'esentation  irr\'eductible  lisse  de $D^1$ (c'est  vrai pour $D^*$ et si $\ell \neq p$ pour $GL_2(F), SL_2(F)$).
 \end{theorem}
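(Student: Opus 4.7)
Since $D^1$ is contained in the compact profinite group $U_D$, it is itself profinite; every smooth irreducible $R$-representation of $D^1$ is therefore finite-dimensional with open kernel, and factors through a finite quotient. For $R=\mathbb{Q}_\ell^{ac}$ such a representation admits an integral lattice stable under the image of $D^1$, and its reduction modulo $\ell$ is well-defined up to semi-simplification.

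The strategy for part (1) is to start from $\pi$ irreducible smooth over $\mathbb{Q}_\ell^{ac}$, apply Th\'eor\`eme 1.1 to find $\Pi$ irreducible smooth of $D^*$ with $\pi$ a component of $\Pi|_{D^1}$ and the restriction decomposing into $d_\Pi\in\{1,2,4\}$ isotypic pieces, and then choose a compatible integral structure on $\Pi$. For $\ell\neq p$, we invoke the Jacquet--Langlands correspondence and its compatibility with reduction modulo $\ell$, so that control of $d_\Pi$ transfers from the $\SL_2(F)$ classification of \cite{HV25}. For $\ell=p$, we use the explicit construction of $\Pi$ from \cite{V87} together with part (2) of Th\'eor\`eme 1.1, which identifies all irreducible $\mathbb{F}_p^{ac}$-representations of $D^1$ as the $q+1$ characters of its abelianisation; the reduction of $\pi$ is then forced to concentrate on a single such character.

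For part (2), given $\o{\pi}$ irreducible over $\mathbb{F}_\ell^{ac}$, we apply Th\'eor\`eme 1.1 with $R=\mathbb{F}_\ell^{ac}$ to find $\o{\Pi}$ irreducible of $D^*$ with $\o{\pi}\subset\o{\Pi}|_{D^1}$. The lifting statement for $D^*$, noted as known in the theorem's parenthetical remark, yields an irreducible $\Pi$ over $\mathbb{Q}_\ell^{ac}$ reducing to $\o{\Pi}$; part (1) then guarantees that the components of $\Pi|_{D^1}$ reduce to those of $\o{\Pi}|_{D^1}$, so that some component of $\Pi|_{D^1}$ lifts $\o{\pi}$. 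The hard part will be (1), namely verifying that $d_\Pi$ does not jump under reduction: for $\ell\neq p$ this will be a transfer via Jacquet--Langlands from loc.\ cit., but for $\ell=p$ it requires a delicate analysis of the types $(K,\lambda)$ of \cite{V87} to check that a higher-dimensional $\pi$ indeed reduces to an irreducible (one-dimensional) representation rather than splitting among several characters of the cyclic quotient of order $q+1$.
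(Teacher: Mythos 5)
Your proposal contains a conceptual error in the case $\ell = p$. You write that the ``hard part'' is ``to check that a higher-dimensional $\pi$ indeed reduces to an irreducible (one-dimensional) representation rather than splitting among several characters of the cyclic quotient of order $q+1$.'' But reduction modulo $\ell$ preserves dimension: if $\pi$ over $\mathbb{Q}_p^{ac}$ has dimension $n>1$, any reduction has dimension $n$, and since by Th\'eor\`eme 1.1(2) all irreducible $\mathbb{F}_p^{ac}$-representations of $D^1$ are characters, such a reduction is necessarily a sum of $n$ characters, hence reducible. No amount of analysis of the types $(K,\lambda)$ can make a representation of dimension $n>1$ reduce to a one-dimensional one. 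The paper's proof does not attempt this: the statement ``C'est \'evident pour les caract\`eres'' deals with dimension $1$, and the treatment of dimension $>1$ opens with ``Donc $\ell\neq p$''---the discussion of higher-dimensional components takes place entirely with $\ell\neq p$.

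For $\ell\neq p$ your proposal is also too vague to count as a proof. Saying that ``control of $d_\Pi$ transfers from the $\SL_2(F)$ classification'' is not an argument until you identify what must not jump under reduction and why. The paper's mechanism is concrete: for $\ell\neq 2$ it introduces the sets $X(\Pi)$ and $X(r_\ell(\Pi))$ of smooth characters $\chi$ of $F^*$ with $\Pi\simeq\Pi\otimes\chi\circ\nrd$ (resp.\ for the reduction), invokes the compatibility of the Galois--quaternions correspondence with reduction and with twisting, and applies \cite[Theorem 4.24]{HV25}: the cardinalities $|X(\Pi)|$ and $|X(r_\ell(\Pi))|$ can differ only when $|X(\Pi)|=2$ and $|X(r_\ell(\Pi))|=4$. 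Running through the three cases $|X(\Pi)|\in\{1,2,4\}$ together with the dichotomy of \S\ref{ss:31} yields the irreducibility of the reductions and both assertions of the theorem. The case $\ell=2$ (so $p$ odd) is treated separately via Lemme~\ref{le:srm}, applied over both $\mathbb{Q}_\ell^{ac}$ and $\mathbb{F}_\ell^{ac}$, which gives directly that $\Pi|_{D^1}$ is a sum of two inequivalent irreducible pieces whose reductions remain irreducible and inequivalent. Your proposal does not single out the $\ell=2$ case, and it does not locate the specific comparison of cardinalities $|X(\cdot)|$ that drives the argument; filling in these steps would require essentially rewriting the paper's proof rather than adjusting yours.
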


 Enfin au \S \ref{CLE},   nous examinons la possibilit\'e d'une correspondance de Langlands \'etendue qui \`a $\Pi$ associe un
 morphisme $\phi$ de $W_F$ dans $PGL_2(R)$ et un caract\`ere du groupe des composantes connexes du centralisateur de $\phi$.
 
 \bigskip Dans l'appendice \S\ref{irr}, nous donnons des crit\`eres d'irr\'eductibilit\'e, utilis\'es au \S \ref{pre}, mais d'un int\'er\^et plus g\'en\'eral.
Dans l'autre appendice \S\ref{com}, nous consid\'erons une $F$-alg\`ebre \`a division centrale $D$, de degr\'e r\'eduit $d>1$, et le noyau $D^1$
de la norme r\'eduite de $D^* $ \`a $F^*$. Nous prouvons que tout \'el\'ement de $D^1  \cap  (1+P_D)$ (o\`u $P_D$ est l'id\'eal maximal 
de l'anneau des entiers de $D$) est produit de deux commutateurs de $D^1$. En particulier $ D^1\cap (1+P_D)$ est le groupe
des commutateurs de $D^1$, un r\'esultat d\^u \`a C. Riehm.  Enfin dans l'appendice \S \ref{modp} dans le cas $F=\mathbb Q_p$ nous comparons nos r\'esultats avec ceux pour $SL_2(\mathbb Q_p)$ de \cite{Abde14} lorsque   $ R=\mathbb F_p^{ac}$, et de \cite{BS25} pour des repr\'esentations de Banach.
 
\section{Notations et rappels}
\subsection{}\label{not} $F$ est un corps local non-archim\'edien de corps r\'esiduel $k_F$ de caract\'eristique $p $ et de cardinal  $q$, $R$ est un corps alg\'ebriquement clos sauf au \S \ref{irr}, $D$ est un corps de quaternions de centre $F$, sauf au \S \ref{com}. On notera  $\mu_F$ le groupe cyclique des racines de l'unit\'e dans $F^*$ d'ordre divisant $q-1$,   $O_D$ l'anneau d'entiers de $D$, $P_D$  l'id\'eal maximal    de $O_D$, $\omega\in D$ une racine  d'ordre $q^2-1$ de d'unit\'e, $p_D$   un g\'en\'erateur de $P_D$ tel que $p_D^2=p_F, \ p_D \omega p_D^{-1}=\omega^q$, $U_D$ le groupe des unit\'es de $O_D$,  $k_D$ le corps r\'esiduel de $O_D$  (une extension quadratique de $k_F$), et $\varrho_D:O_D\to k_D$ la surjection canonique.
La conjugaison par $p_D$ induit l'\'el\'evation \`a la puissance $q$ sur $k_D$.

La norme r\'eduite $\nrd:D^* \to  F^*$ de noyau $D^1$  induit un  hom\'eomorphisme  $  D^*/F^*D^1  \to  F^*/(F^*)^2$. Le groupe
$F^*D^1$ est ouvert et cofini dans $D^* $ sauf si  $\charf_F=2$ o\`u   il est seulement ferm\'e cocompact. 

\subsection{}  La  correspondance locale de Langlands pour $D^*$  avec $R$ comme corps de coefficients,   donne une bijection 
  $\Pi \leftrightarrow \sigma(\Pi)$
   entre les classes d'isomorphisme des  $R$-repr\'esentations irr\'eductibles lisses 
  de dimension $>1$  de $D^*$ et celles  de dimension $2$ du groupe de Weil $W_F$ de $F$ \cite{V87}. Si $\charf_R\neq p$ la correspondance locale de Langlands pour $GL_2(F)$ donne une bijection  $\sigma(\Pi) \leftrightarrow JL(\Pi) $ entre les classes d'isomorphisme des  $R$-repr\'esentations de dimension $2$ de $W_F$  et celles  des $R$-repr\'esentations supercuspidales de $GL_2(F)$ \cite{V01}. Ces bijections sont compatibles \`a la torsion par les caract\`eres au sens suivant. Soit $\alpha:W_F\to F^*$ l'application de r\'eciprocit\'e
du corps de classes et $\det:GL_2(F)\to F^*$ le d\'eterminant. Pour tout $R$-caract\`ere lisse $\chi$ de $F^*$, 
$$\Pi \otimes \chi \circ \nrd \leftrightarrow \sigma(\Pi) \otimes \chi \circ \alpha  , \ \ \ \sigma(\Pi) \otimes \chi \circ \alpha\leftrightarrow  JL(\Pi) \otimes \chi \circ \det.$$
Ces bijections  d\'ependent du choix d'une racine carr\'ee de $q$ dans $R^*$, mais si   $\charf_R\neq p$,  
 la correspondance locale de Jacquet-Langlands $\Pi \leftrightarrow JL(\Pi)$
entre les classes d'isomorphisme des  $R$-repr\'esentations irr\'eductibles lisses 
   de dimension $>1$ de $D^*$ et  supercuspidales de $GL_2(F)$ n'en d\'epend pas.

\subsection{}
Soit $\ell$ un nombre premier. 
Une  $\mathbb Q_\ell^{ac}$-repr\'esentation  lisse irr\'eductible $\Pi$ de $D^*$  est enti\`ere si et seulement si son caract\`ere central $\omega_\Pi$ est entier.  Si elle est enti\`ere, la longueur de sa r\'eduction modulo $\ell$ est $\leq 2$. Quand elle est \'egale \`a $2$,  la dimension de $\Pi$  est $2$. C'est le cas si $\ell=p$. Toute $\mathbb F_\ell^{ac}$-repr\'esentation lisse irr\'eductible  de $D^*$  est la   r\'eduction modulo $\ell$ d'une $\mathbb Q_\ell^{ac}$-repr\'esentation lisse irr\'eductible de $D^*$   \cite{V87}.

\section{Preuve du th\'eor\`eme principal}\label{pre}
\subsection{}\label{ss:30}

Toute $R$-repr\'esentation lisse irr\'eductible de $ D^*$ ou de $D^1$ est de dimension finie  et a  un caract\`ere central \cite[\S2]{H09}. Une $R$-repr\'esentation lisse irr\'eductible $\pi$ de $ D^1$ apparait dans la restriction d'une $R$-repr\'esentation lisse irr\'eductible $\Pi$ de $ D^*$. En effet, $\Pi$ est triviale sur $1+P_D^i$ pour $i>>0$, donc $\pi$ s'\'etend \`a $F^*D^1(1+P_D^i)$ qui est d'indice fini dans $D^*$.
 La restriction de $\Pi$ \`a $D^1$ est semi-simple de longueur finie, de composants irr\'eductibles ayant la m\^eme multiplicit\'e \cite{HV25}. 

 \begin{definition} Une $R$-repr\'esentation irr\'eductible lisse $\Pi$ de $D^*$ d\'efinit un $L$-paquet $L(\Pi)$ de $D^1$,  form\'e de l'ensemble des classes d'isomorphismes des $R$-repr\'esentations irr\'eductibles contenues dans  sa restriction \`a $D^1$.  \end{definition}
 
 Deux $L$-paquets sont disjoints ou confondus, et l'union des $L$-paquets de  $D^1$ est l'ensemble des  classes d'isomorphisme des $R$-repr\'esentations irr\'eductibles de $D^1$.

 Comme  $\Pi$ a un caract\`ere central, on a $ \End_{RD^1} \Pi=  \End_{RF^*D^1} \Pi$. Comme dans  la preuve de   \cite[lemme 2.3]{HV25}
$$ \End_{RF^*D^1} \Pi\simeq   \Hom_{RD^*} (\Pi, \ind_{F^*D^1}^{D^*} \Pi|_{F^*D^1})\simeq \Hom_{RD^*} (\Pi,\Pi \otimes \ind_{F^*D^1}^{D^*} 1).$$
Notons $d_\Pi$  la dimension  de $ \End_{RD^1} \Pi$. 

 Si $d_\Pi=1$ alors  $\Pi|_{D^1}$ est irr\'eductible,  si $d_\Pi=2$ alors  $\Pi|_{D^1}$ est somme de deux repr\'esentations irr\'eductibles  non \'equivalentes,  si $d_\Pi=4$   alors $\Pi|_{D^1}$ est somme de deux repr\'esentations irr\'eductibles   \'equivalentes ou de quatre repr\'esentations irr\'eductibles  non \'equivalentes. 
 
 Nous allons montrer que $d_\Pi=1,2$ ou $4$ et que si $d_\Pi=4$ le dernier cas ne se produit pas.

\subsection{}  Le groupe des commutateurs de $D^*$ est $(D^*,D^*)=D^1$ \cite{NM43} donc les $R$-caract\`eres lisses de $D^*$ sont $\chi \circ \nrd$ pour les $R$-caract\`eres lisses $\chi$  de $F^*$. Leur restriction \`a $D^1$ est le caract\`ere trivial.
Mais $D^1$ poss\`ede des caract\`eres non triviaux car le commutateur de $D^1$ est $D^1 \cap (1+P_D)$ \cite{Rie70} (voir l'appendice \S \ref{com}). L'application $\rho_D$ donne un isomorphisme de $D^1/(D^1 \cap (1+P_D))$ sur le noyau $k_D^1$ de la norme de $k_D/k_F$. Ce noyau est cyclique d'ordre $q+1$. Donc  les $R$-caract\`eres lisses de $D^1$ sont $\chi \circ \rho_D$ pour les $R$-caract\`eres lisses  $\chi$ de $k_D^1$. 

\bigskip  Si $\charf_R=p$, toute $R$-repr\'esentation irr\'eductible lisse $\pi$ de $D^1$ est triviale sur le pro-$p$ sous groupe $D^1 \cap (1+P_D)$ donc est un caract\`ere. Ceci d\'emontre la partie 2) du th\'eor\`eme principal.

\bigskip Sans hypoth\`ese sur $\charf_R$, les caract\`eres  de $D^1$ apparaissent dans les restrictions \`a $D^1$ des $R$-repr\'esentations irr\'eductibles lisses $\Pi$ de $D^*$ triviales sur $1+P_D$. Une telle repr\'esentation de dimension $>1$ est de la forme \cite{V87}, \cite[\S3]{H09}
    $$\Pi \simeq \ind_{F^*U_D}^{D^*} \lambda$$
est induite d'un $R$-caract\`ere  $\lambda $ de $F^*U_D$ trivial sur $1+P_D$ tel que   $ \lambda |_{U_D}$ est r\'egulier, i.e. $ \lambda |_{U_D} =\nu \circ \rho_D|_{U_D}$
pour un caract\`ere $\nu$ de $k_D^*$  r\'egulier sur $k_F$, i.e. $\nu^q\neq \nu$.  

Comme $D^* = F^*U_D\sqcup p_D F^*U_D$,   la dimension de $\Pi$ est $2$ et la restriction de $\Pi$  \`a $D^1$ est la somme  de la restriction \`a $D^1$ de   $\lambda$ et de  son conjugu\'e   par $p_D$. 
 On a ${}^{p_D} \chi =\chi^q$ et  $\rho_D(D^1)$ est  le noyau $k_D^1$ de la norme de $k_D^*$ \`a $k_F^*$. La r\'egularit\'e  $\nu ^q \neq \nu$  est \'equivalente \`a la non-trivialit\'e de $\nu$ sur $k_D^1$, car  l'image de l'homomorphisme $x\to x^{ q-1}$ de noyau $k_F^*$ est  d'ordre $|k_D^*| /  |k_F^*|= q+1$, c'est donc $k_D^1$.  On note $\pi(\nu)=\lambda|_{D^1}= \nu|_{k_D^1} \circ \rho|_{D^1}$. Conjuguant par $p_D$, on a $\pi(\nu^q)= \lambda^{p_D}|_{D^1}= \nu^q|_{k_D^1} \circ \rho|_{D^1}$. On  obtient: 

\begin{lemma}\label{le:mr}  $\Pi|_{D^1} \simeq  \pi(\nu) \oplus \pi(\nu^q)$ est somme de deux caract\`eres.
\end{lemma}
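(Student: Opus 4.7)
The plan is to compute $\Pi|_{D^1}$ by a direct application of Mackey's formula for the induced representation $\Pi = \ind_{F^*U_D}^{D^*} \lambda$. Since $D^* = F^*U_D \sqcup p_D F^*U_D$, I would take $\{1, p_D\}$ as a set of coset representatives and realise $\Pi$ on a two-dimensional space $R e_1 \oplus R e_{p_D}$ on which $F^*U_D$ acts diagonally: by $\lambda$ on $e_1$ and by the $p_D$-conjugate character $\lambda^{p_D}\colon g \mapsto \lambda(p_D^{-1} g p_D)$ on $e_{p_D}$. This gives $\Pi|_{F^*U_D} \simeq \lambda \oplus \lambda^{p_D}$. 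Restricting further along $D^1 \subset U_D \subset F^*U_D$ then yields $\Pi|_{D^1} \simeq \lambda|_{D^1} \oplus \lambda^{p_D}|_{D^1}$.

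The next step is to identify these two summands with the characters $\pi(\nu)$ and $\pi(\nu^q)$ of $D^1$. From the hypothesis $\lambda|_{U_D} = \nu \circ \rho_D|_{U_D}$ and the identification $\rho_D(D^1) = k_D^1$ recalled just above, one reads off $\lambda|_{D^1} = \nu|_{k_D^1} \circ \rho_D|_{D^1} = \pi(\nu)$. For the other summand, I would use the fact, already noted at the beginning of \S\ref{not}, that conjugation by $p_D$ on $O_D$ induces the Frobenius $x \mapsto x^q$ on $k_D$; consequently $\lambda^{p_D}|_{U_D} = \nu^q \circ \rho_D|_{U_D}$ and therefore $\lambda^{p_D}|_{D^1} = \nu^q|_{k_D^1} \circ \rho_D|_{D^1} = \pi(\nu^q)$.

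There is essentially no obstacle: the argument is a direct Mackey computation once the two-element coset decomposition of $D^*/F^*U_D$ is fixed, and the identification of the characters is forced by the explicit formula for $\lambda$. The only fact to keep in mind, which has already been verified in the paragraph preceding the lemma, is that the regularity assumption $\nu^q \neq \nu$ on $k_D^*$ is equivalent to $\nu|_{k_D^1}$ being non-trivial; this simultaneously ensures that $\pi(\nu)$ and $\pi(\nu^q)$ are non-trivial characters and that $\pi(\nu) \neq \pi(\nu^q)$, so that the decomposition $\Pi|_{D^1} \simeq \pi(\nu) \oplus \pi(\nu^q)$ is multiplicity-free.
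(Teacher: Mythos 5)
Your computation of $\Pi|_{D^1}$ via the coset decomposition $D^* = F^*U_D \sqcup p_D F^*U_D$ and the identification of the two summands as $\pi(\nu)$ and $\pi(\nu^q)$ is exactly the paper's argument; framing it as a Mackey computation is a minor presentational difference, and the verification that $D^1 \subset U_D$ (so the restriction is just a direct restriction of the two characters) is implicitly the same.

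However, your final sentence contains a genuine error that you should correct. Regularity of $\nu$, i.e.\ $\nu^q \neq \nu$ on $k_D^*$, is equivalent to $\nu|_{k_D^1}$ being \emph{non-trivial}; it does \emph{not} imply $\nu^q \neq \nu$ on $k_D^1$, and hence does not imply $\pi(\nu) \neq \pi(\nu^q)$. Indeed $\pi(\nu) = \pi(\nu^q)$ precisely when $(\nu|_{k_D^1})^{q-1} = 1$, and since $\nu|_{k_D^1}$ already has order dividing $q+1$, this happens exactly when $\nu|_{k_D^1}$ has order dividing $\gcd(q-1, q+1)$, i.e.\ order $2$. When $p$ is odd and $\charf_R \neq 2$ there is a unique such character, and for the corresponding $\nu$ one has $\pi(\nu) = \pi(\nu^q)$, so $\Pi|_{D^1}$ is \emph{not} multiplicity-free; this is precisely the case $d_\Pi = 4$ isolated in the lemma immediately following. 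The statement of the present lemma only asserts that $\Pi|_{D^1}$ is a sum of two characters, without claiming they are distinct, so your proof of the lemma proper is fine, but the parenthetical claim of multiplicity-freeness should be deleted.
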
  
On a  donc  $d_\Pi=4$ si ces caract\`eres sont \'egaux et  $d_\Pi=2$ sinon.

\medskip  Quand a-t-on  l'\'egalit\'e $ \pi(\nu) =  \pi(\nu^q)$ ?   
L'application $\nu|_{k_D^1} \to \pi(\nu)$ est injective, donc  l'\'egalit\'e  est \'equivalent \`a $\nu^q=\nu$ sur $k_D^1$.  Le  $R$-caract\`ere $\nu|_{k_D^1}$ n'est pas trivial et $(\nu|_{k_D^1})^{q+1}=1$, aussi  $\nu|_{k_D^1}^{q-1}=1$ si et seulement si l'ordre de $\nu|_{k_D^1}$ est positif et divise  $(q-1,q+1)$ si et seulement si  $\nu|_{k_D^1}$ est d'ordre $2$.  Le groupe $k_D^1$ est cyclique d'ordre $q+1$. Il n'admet  pas de $R$-caract\`ere d'ordre $2$ si $p=2$ ou si $\charf_R=2$. Sinon, il admet un unique $R$-caract\`ere d'ordre $2$. On a donc:

 \begin{lemma}\label{le:2}  1) On a $\nu=\nu^q$ sur $k_D^1$ si et seulement si $\nu|_{k_D^1}$ est d'ordre $2$. 
 
2) On a  $d_\Pi=2$ sauf si  $p$ est impair, la caract\'eristique de $R$ n'est pas $2$, et $\nu|_{k_D^1}$ est l'unique  $R$-caract\`ere d'ordre $2$  de $k_D^1$, o\`u $d_\Pi=4$.

 \end{lemma}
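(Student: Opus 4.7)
La preuve est essentiellement contenue dans le paragraphe qui pr\'ec\`ede directement l'\'enonc\'e, et mon plan se borne \`a en extraire proprement le squelette logique en deux temps.

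Pour la partie (1), je repartirais de l'\'equivalence \'el\'ementaire $\nu = \nu^q$ sur $k_D^1$ si et seulement si $\nu^{q-1}|_{k_D^1} = 1$. Comme $k_D^1$ est cyclique d'ordre $q+1$, l'ordre $n$ de $\nu|_{k_D^1}$ divise $q+1$, et la condition pr\'ec\'edente se traduit par $n \mid \gcd(q-1,q+1)$. Un calcul \'el\'ementaire donne $\gcd(q-1,q+1) = 1$ si $q$ est pair et $2$ si $q$ est impair. La r\'egularit\'e de $\nu$ sur $k_F$ (i.e.\ $\nu^q \neq \nu$ sur $k_D^*$) impose, comme il a \'et\'e rappel\'e avant le Lemme \ref{le:mr}, que $\nu|_{k_D^1}$ soit non trivial, donc $n \geq 2$, d'o\`u $n = 2$ est l'unique possibilit\'e (forcement $q$ impair).

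Pour la partie (2), je combinerais (1) avec l'injectivit\'e $\nu|_{k_D^1} \mapsto \pi(\nu)$ d\'ej\`a signal\'ee, cons\'equence imm\'ediate de l'\'egalit\'e $\pi(\nu) = \nu|_{k_D^1} \circ \rho|_{D^1}$ et de la surjectivit\'e de $\rho|_{D^1}: D^1 \to k_D^1$. Cela ram\`ene la condition $d_\Pi = 4$, i.e.\ $\pi(\nu) = \pi(\nu^q)$, \`a l'\'egalit\'e $\nu|_{k_D^1} = \nu^q|_{k_D^1}$, donc par (1) \`a ce que $\nu|_{k_D^1}$ soit d'ordre $2$. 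Il reste \`a pr\'eciser quand $k_D^1$ admet effectivement un tel caract\`ere \`a valeurs dans $R^*$: puisque $k_D^1$ est cyclique d'ordre $q+1$, un caract\`ere d'ordre $2$ existe si et seulement si $2 \mid q+1$ (soit $p$ impair) et $R^*$ contient une racine primitive seconde de l'unit\'e (soit $\charf_R \neq 2$); dans ce cas il est unique. Dans tous les autres cas on a n\'ecessairement $\pi(\nu) \neq \pi(\nu^q)$ et donc $d_\Pi = 2$.

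Je ne pr\'evois aucun obstacle s\'erieux: la preuve est purement arithm\'etique. Le seul point de vigilance est la distinction soigneuse des deux sources ind\'ependantes d'obstruction \`a la $2$-torsion intervenant dans l'\'enonc\'e, \`a savoir $p = 2$, qui tue la $2$-torsion dans le groupe $k_D^1$ lui-m\^eme (car alors $q+1$ est impair), et $\charf_R = 2$, qui la tue du c\^ot\'e des valeurs dans $R^*$; ces deux cas sont logiquement s\'epar\'es et doivent \^etre mentionn\'es conjointement dans la conclusion.
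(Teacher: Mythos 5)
Your proposal is correct and follows essentially the same reasoning as the paper: reduce $\nu=\nu^q$ on $k_D^1$ to a divisibility condition on the order of the nontrivial character $\nu|_{k_D^1}$ via $\gcd(q-1,q+1)$, then use the injectivity of $\nu|_{k_D^1}\mapsto\pi(\nu)$ and the classification of order-$2$ $R$-characters of the cyclic group $k_D^1$ according to whether $p=2$ or $\charf_R=2$. The only cosmetic difference is that you make the two obstructions ($2\nmid q+1$ versus no primitive square root of unity in $R^*$) slightly more explicit than the paper does.
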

  
\begin{remark} Si $\charf_R \neq p$ et $p$   impair, il existe un unique $L$-paquet de $D^1$ avec $d_\Pi=4$ par la  correspondance  de Jacquet-Langlands et \cite[Proposition 4.22]{HV25}. Le lemme \ref{le:2} le d\'ecrit explicitement. 
\end{remark} 

Si $\charf_R=p$, le th\'eor\`eme principal est d\'emontr\'e.
\subsection{}\label{ss:31}On  suppose que $\Pi$ est une $R$-repr\'esentation lisse irr\'eductible de $ D^*$ de dimension $>1$. 
 
On suppose aussi que $\charf_R \neq 2,  p$.  Alors la  $R$-repr\'esentation $\ind_{ZD^1}^{D^*} 1$ de $D^*$ est la somme des  $\chi \circ \nrd$ o\`u $\chi $ parcourt les $R$-caract\`eres lisses de $F^*$ de carr\'e trivial. 
 Le nombre de $\chi$ tels que 
$\Pi \simeq \Pi \otimes \chi \circ \nrd $ est $d_\Pi$.
Il n'y a pas de diff\'erence avec le cas de $SL_2(F)$. Le nombre de $\chi$ tels que 
$JL(\Pi )\simeq JL(\Pi )\otimes \chi \circ \det$  est \'egal \`a la  dimension de  $ \End_{RSL_2(F)} JL(\Pi)$ \cite{HV25}. 
La correspondance locale de  Langlands implique:
\begin{equation}\label{eq:QGG}\Pi \simeq \Pi \otimes \chi \circ \nrd \Leftrightarrow \sigma(\Pi) \simeq \sigma(\Pi)   \otimes \chi \circ\alpha   \Leftrightarrow  JL(\Pi)  \simeq  JL(\Pi)  \otimes \chi \circ\det.
\end{equation}
Donc, 
\begin{lemma} Les  alg\`ebres d'endomorphismes  de  $\Pi|_{D^1}$ et de $JL(\Pi)|_{SL_2(F)}$ ont la m\^eme dimension.
\end{lemma}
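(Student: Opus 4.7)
Le plan consiste \`a exprimer chacun des deux membres comme le cardinal d'un certain ensemble de $R$-caract\`eres quadratiques stabilisateurs, puis \`a identifier ces ensembles via la correspondance de Langlands.

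Tout d'abord, je m'appuierais sur l'isomorphisme d\'ej\`a \'etabli au \S\ref{ss:30},
$$\End_{RD^1}\Pi \simeq \Hom_{RD^*}(\Pi, \Pi \otimes \ind_{F^*D^1}^{D^*} 1),$$
et sur l'observation faite au d\'ebut du pr\'esent paragraphe: puisque $\nrd$ induit un hom\'eomorphisme $D^*/F^*D^1 \congto F^*/(F^*)^2$ et que $\charf_R \neq 2$, la repr\'esentation $\ind_{F^*D^1}^{D^*} 1$ se d\'ecompose en somme directe des $\chi \circ \nrd$, o\`u $\chi$ parcourt les $R$-caract\`eres lisses de $F^*$ de carr\'e trivial. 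Le lemme de Schur livre alors
$$d_\Pi = \card\{\chi : F^*\to R^*\text{ lisse},\ \chi^2=1,\ \Pi \simeq \Pi \otimes \chi \circ \nrd\}.$$

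Ensuite, je rappellerais que l'argument parall\`ele men\'e dans \cite{HV25} pour la paire $(GL_2(F),SL_2(F))$ fournit de mani\`ere identique, utilisant la d\'ecomposition analogue de $\ind_{F^*SL_2(F)}^{GL_2(F)} 1$,
$$\dim_R \End_{RSL_2(F)} JL(\Pi) = \card\{\chi : \chi^2=1,\ JL(\Pi) \simeq JL(\Pi) \otimes \chi \circ \det\}.$$
La conclusion d\'ecoulerait alors imm\'ediatement de la cha\^ine d'\'equivalences (\ref{eq:QGG}), issue de la compatibilit\'e \`a la torsion par les caract\`eres rappel\'ee au \S 2: les deux ensembles de caract\`eres stabilisateurs sont litt\'eralement \'egaux, donc leurs cardinaux co\"incident.

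Il n'y a pas v\'eritablement d'obstacle: tous les ingr\'edients ont \'et\'e mis en place plus haut ou import\'es de \cite{HV25} et \cite{V87}. Le seul point de vigilance est de s'assurer que, bien que la correspondance de Langlands pour $D^*$ d\'epende du choix d'une racine carr\'ee de $q$ dans $R^*$, l'\'equivalence \og stabilisation de $\Pi$ par $\chi\circ\nrd \Leftrightarrow$ stabilisation de $JL(\Pi)$ par $\chi\circ\det$\fg\ est intrins\`eque; ce qui est garanti par l'ind\'ependance au m\^eme choix de la correspondance de Jacquet-Langlands, not\'ee au \S 2.
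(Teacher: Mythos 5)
Your argument coincides with the paper's: both identify $d_\Pi$ and $\dim_R\End_{RSL_2(F)}JL(\Pi)$ with the number of quadratic $R$-characters $\chi$ of $F^*$ stabilizing $\Pi$ (resp.\ $JL(\Pi)$) via the decomposition of $\ind_{F^*D^1}^{D^*}1$ (resp.\ $\ind_{F^*SL_2(F)}^{GL_2(F)}1$), and then match the two sets through the chain of equivalences \eqref{eq:QGG} coming from the compatibility of the correspondence with character twists. Your closing remark on independence from the choice of $\sqrt q$ is a reasonable sanity check, though not strictly needed since the chain \eqref{eq:QGG} already runs through $\sigma(\Pi)$ with a single fixed normalization.
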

  $JL(\Pi)|_{SL_2(F)}$ est somme de $d_\Pi$  repr\'esentations irr\'eductibles non \'equivalentes, car la  restriction de $JL(\Pi)$ \`a $SL_2(F)$ est  sans multiplicit\'e   (loc. cit.). 
  
 \medskip  On  d\'eduit du lemme et de \cite[Theorem1.1]{HV25} que $d_\Pi=1,2$ ou $4$.

 \medskip Si $R=\mathbb C$, le dernier cas o\`u $d_\Pi=4$ et $\Pi|_{D^1}$  sans multiplicit\'e, ne se produit pas
 \cite{L71},  \cite{LL79}, \cite{L24}. Cette propri\'et\'e reste vraie  dans le cas $\ell$-adique $R=\mathbb Q_\ell^{ac}$ car $ \mathbb Q_\ell^{ac}\simeq \mathbb C$, et dans le cas $\ell$-modulaire $R=\mathbb F_\ell^{ac}$  car $\Pi$ se rel\`eve a $\mathbb Q_\ell^{ac} $ en une repr\'esentation $\tilde \Pi$ telle que $d_{\tilde \Pi}= d_\Pi$ (ceci a \'et\'e d\'emontr\'e pour $\sigma_\Pi$ \cite[Theorem 4.23 2)]{HV25}). 
 Cela reste vrai pour tout $R$  (suppos\'e toujours de caract\'eristique non $2$), car  $\Pi $ tordue par un caract\`ere lisse est d\'efinie sur  la cl\^oture alg\'ebrique $R_c$  de son corps premier, les $R$-caract\`eres lisses  de $F^*$ de carr\'e trivial sont des $R_c$-caract\`eres,  et l'extension des scalaires  de $R_c$ \`a $R$ donne une  injection des classes d'isomorphisme des repr\'esentations lisses irr\'eductibles de $D^*$ sur $R_c$ vers celles sur $R$.

\medskip    Ceci ach\`eve la preuve  du th\'eor\`eme principal si $\charf_R=p$  ou si $\charf_R \neq 2 $. Il reste le cas   $\charf_R=2$  et $p$ impair. On utilisera la construction explicite de $\Pi$ pour montrer le th\'eor\`eme (voir la fin de \S \ref{ss:direct}).

\subsection{}\label{ss:direct} Une $R$-repr\'esentation lisse irr\'eductible $\Pi$ de $ D^*$ 
est tordue  par un caract\`ere  d'une repr\'esentation  mod\'er\'ee ou  minimale sauvagement ramifi\'ee.  Supposons   $\charf_R \neq p$ et $\Pi$ minimale sauvagement ramifi\'ee. On a une description explicite de $\Pi$ \cite{V87}.

Soit $f>1$ l'entier tel que 
 $\Pi$ est triviale sur $1+P_D^f$ mais  non sur $1+P_D^{f-1}$. Le groupe $F^*(1+P_D^{[(f+1)/2]})$  est commutatif modulo $1+P_D^f$. On choisit un $R$-caract\`ere $\chi $ de $F^*(1+P_D^{[(f+1)/2]}) $ contenu dans  $\Pi|_{F^*(1+P_D^{[(f+1)/2]}) }$ et l'on note $J$ son centralisateur dans $D^* $.
Alors                               
$$ \Pi\simeq \ind_J^{D^*}\lambda $$
 est induite d'une $R$-repr\'esentation  $\lambda $ de $J$ de restriction $\chi$-isotypique \`a $ F^*(1+P_D^{[(f+1)/2]})$.
 Lorsque $f$ est pair,  
         $$  J=E^* (1+P_D^{f/2}),  \ E/F  \ \text{quadratique,   s\'eparable, ramifi\'ee}$$
        (a priori la construction ne donne pas $E/F$ s\'eparable, mais on peut choisir $E/F$ s\'eparable), 
et $J/Ker(\chi)$ est ab\'elien, $\lambda $ est un caract\`ere,  la dimension de $\Pi $ est 
 $$[D^*:J]=[U_D: O_E^* (1+P_D^{f/2})]= (q+1) \, [1+P_D: (1+P_E) (1+P_D^{f/2})] >2.$$ 
 Lorsque $f $ est impair,  
$$J=E^* (1+P_D^{(f-1)/2}) ,  \ E/F  \ \text{quadratique,  s\'eparable, non ramifi\'ee}, $$
et $J/Ker(\chi)$ est un groupe d'Heisenberg.  Le groupe $J=F^*O_E^*(1+P_D^{[f/2]})$ contient les  sous-groupes distingu\'es
$$J''=F^*(1+P_E) (1+P_D^{(f+1)/2}) \subset J'=F^*(1+P_E) (1+P_D^{(f-1)/2}),$$    $J/Ker(\chi)$ est  une extension centrale par le groupe commutatif $J'' /Ker (\chi)$ du  groupe fini $J'/J''$ d'ordre $q^2$. 
La restriction de $\lambda $ \`a $J'$ est irr\'eductible, la dimension de $\lambda$ est $q$, 
 celle de $\Pi$ est 
$$q[D^*:J]=q [D^*:E^* (1+P_D^{(f-1)/2})]=2q \, [1+P_D: (1+P_E)(1+P_D^{(f-1)/2}]> 2.$$
L'indice de $JD^1$ dans $D^*$  \'egal \`a $[F^*: \nrd(J)]$ divise $2$
 car $ \nrd(J)$ contient   $N_{E/F}(E^*)$. On note $$\pi(J,\lambda)=\ind_{J\cap D^1}^{D^1}(\lambda|_{J\cap D^1}),$$ 
 Si $D^*=JD^1$ alors $\Pi|_{D^1}= \pi(J,\lambda)$. Si $p=2$ on peut avoir $D^*=JD^1$  et  $\pi(J,\lambda)$ irr\'eductible.  
 \begin{lemma} Si  $p=2$ et $\Pi$ triviale sur $1+P_D^2$ mais non sur $1+P_D$, alors $\Pi|_{D^1}$ est irr\'eductible. 
 \end{lemma}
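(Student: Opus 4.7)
Le plan est d'exploiter la parité de $\dim \Pi$. Dans le cas $f=2$, la construction explicite du \S\ref{ss:direct} donne $\Pi \simeq \ind_J^{D^*}\lambda$ avec $J = E^*(1+P_D)$ ($E/F$ quadratique séparable ramifiée) et $\lambda$ un caractère, d'où $\dim \Pi = (q+1)\,[1+P_D : (1+P_E)(1+P_D)] = q+1$, qui est impair puisque $q = 2^n$ (car $p=2$). Je commencerais par vérifier que cette description s'applique à toute $\Pi$ vérifiant les hypothèses: le conducteur sur $D^*$ d'un twist $\chi \circ \nrd$ avec $\chi$ caractère lisse non trivial de $F^*$ de conducteur $c \geq 1$ vaut $2c-1$ (il est donc impair), et une représentation de conducteur pair $f=2$ ne peut être le twist d'une représentation modérée par un tel caractère; elle est donc elle-même minimalement sauvagement ramifiée, et $\Pi|_{D^1}$ ne change pas par twist par un caractère de $F^*$ à travers $\nrd$.

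L'étape centrale consiste à appliquer le théorème de Clifford à $D^1 \triangleleft D^*$ (ou plutôt à $F^*D^1 \subset D^*$, $[D^*:F^*D^1] = [F^*:F^{*2}]$ étant fini) et à l'irréductibilité de $\Pi$: les composantes irréductibles de $\Pi|_{F^*D^1}$ forment une seule orbite sous $D^*/F^*D^1$ et ont donc toutes la même dimension $e$; d'après \S\ref{ss:30} elles apparaissent avec la même multiplicité $m$. Si $r$ désigne le nombre de composantes distinctes, on obtient $\dim \Pi = rme$ et $d_\Pi = rm^2$. L'argument de \S\ref{ss:31}, applicable ici puisque $\charf_R \neq p = 2$ entraîne $\charf_R \neq 2, p$, donne $d_\Pi \in \{1, 2, 4\}$.

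La conclusion résulte d'une analyse de parité élémentaire: $rme = q+1$ impair force $r$, $m$ et $e$ tous impairs, donc $d_\Pi = rm^2$ est impair. La seule valeur impaire dans $\{1, 2, 4\}$ étant $1$, on a $d_\Pi = 1$, c'est-à-dire que $\Pi|_{D^1}$ est irréductible. L'obstacle principal sera la justification soigneuse de la première étape — la réduction au cas minimalement sauvagement ramifié via l'analyse de la parité des conducteurs des twists caractériels — le reste de la démonstration étant une application directe du théorème de Clifford combinée avec l'observation de parité.
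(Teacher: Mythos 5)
Your proof is correct, but it takes a genuinely different route from the paper's.

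The paper argues directly from the explicit construction: it computes $\nrd(J)$ using the fact that $E/F$ is ramified (so a uniformizer is a norm), that $\mu_F^2=\mu_F$ and $\mu_F\cap D^1=\{1\}$ when $p=2$, and that $\nrd(1+P_D)=1+P_F$; this gives $\nrd(J)=F^*$, hence $D^*=JD^1$ and $J\cap D^1=(1+P_D)\cap D^1$. It then identifies $\Pi|_{D^1}=\ind_{(1+P_D)\cap D^1}^{D^1}\lambda|_{(1+P_D)\cap D^1}$, shows $\lambda(1+p_Dx)=\psi(\varrho(x))$ for a nontrivial additive character $\psi$ of $k_D$, and concludes irreducibility by the intertwining criterion. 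This is self-contained and produces an explicit model of $\Pi|_{D^1}$.

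Your argument is instead a parity count. You first observe (correctly) that the conductor of $\chi\circ\nrd$ is $2c-1$, always odd, so a representation of conductor $2$ cannot be a twist of a tame one; thus the explicit construction applies and $\dim\Pi = q+1$, odd since $q$ is a power of $2$. Then Clifford theory for $F^*D^1\subset D^*$ gives $\dim\Pi=rme$ and $d_\Pi=rm^2$, with all components conjugate hence equidimensional (and the equality of multiplicities is the result quoted from \cite{HV25} in \S\ref{ss:30}). From $rme$ odd you get $r,m$ odd, so $d_\Pi$ is odd; combined with $d_\Pi\in\{1,2,4\}$ from \S\ref{ss:31} (valid here since $p=2$ and $\charf_R\neq p$ force $\charf_R\neq 2,p$), you conclude $d_\Pi=1$. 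This is clean and short, but it leans on \S\ref{ss:31}, which in turn rests on the Jacquet--Langlands and Langlands correspondences and on \cite{HV25}; the paper's argument avoids all of that and in addition exhibits $\Pi|_{D^1}$ explicitly. Both proofs are valid; yours trades the explicit norm computation for an appeal to the classification of $d_\Pi$ already established earlier in the paper.
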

 \begin{proof}[Preuve]
Comme $f=2$ est pair,   la dimension de  $\Pi$   est  $[D^*:J]=q+1$ et $E/F$ est ramifi\'e,  Donc  la norme  r\'eduite de $E^*$ contient une uniformisante de $F$, et la norme  r\'eduite  de $\mu_F$ est $\mu_F^2$. On a   
 $ \mu_F^2=\mu_F$  et $\mu_F \cap D^1$ est trivial  car $p=2$.  On a  $1+P_F=\nrd(1+P_D)=\nrd(1+P_D^2)$ (voir la formule \eqref{eq:iso0}). Donc $F^*=\nrd (J)$, $D^*=JD^1$, et   $J\cap D^1= (1+P_D) \cap D^1$. Donc (voir la formule \eqref{eq:iso}),
 $$\frac{J\cap D^1}{(1+P_D^2)\cap D^1}=   \frac{(1+P_D)\cap D^1}{(1+P_D^2)\cap D^1} \simeq \frac{1+P_D}{1+P_D^2}.$$ 
  Pour $x\in O_D$ tel que $1+p_D x \in J\cap D^1$ modulo $1+P_D^2$ on  a $\lambda (1+p_D x)=\psi( \varrho(x))$ pour un $R$-caract\`ere non-trivial $\psi$ de $k_D$.
La repr\'esentation $\Pi|_{D^1}=\ind_{J\cap D^1}^{D^1}\lambda|_{J\cap D^1}$ est irr\'eductible 
    \end{proof}
 Supposons maintenant $[D^*:JD^1]=2$. C'est le cas,  si
  l'extension $E/F$ est non ramifi\'ee,  car   $JD^1$  ne contient pas $p_D$, ou si l'extension $E/F$ est  ramifi\'ee et $p$ est impair,   car $ \nrd(J)$ contient pas de  racine de l'unit\'e d'ordre $q-1$. 
 Pour $d$ dans $D^*$ non dans $JD^1$,   $$\Pi|_{D^1}= \pi(J,\lambda) \oplus  {}^d \pi(J,\lambda)$$ est somme de deux repr\'esentations de dimension  $(\dim_R \Pi)/2  >1$. 
  
  La restriction de $\lambda$  au pro-p radical $J_p= (1+P_E)(1+P_D^{[f/2]}) $  de $J=E^*(1+P_D^{[f/2]})$ est irr\'eductible.  L'entrelacement dans $D^*$ de $\chi$ est $J$,  donc celui de $\lambda$  et de $ \lambda|_{J_p}$  dans $D^*$ est aussi $J$.

 Si $p$ est impair, tout \'el\'ement de $1+P_F$  est un carr\'e, et $J_p=(Z\cap J_p)(D^1\cap J_p)$ car $\nrd(J_p)\subset 1+P_F=Z\cap J_p= (1+P_F)^2 = \nrd (Z\cap J_p) \subset \nrd(J_p)$. 
 Comme $Z\cap J_p$ agit dans $\lambda$ par un caract\`ere, 
 $\lambda|_{D^1\cap J_p}$ est  irr\'eductible, 
d'entrelacement   dans $D^*$  \'egal \`a $J$, donc  l'entrelacement dans  $D^1$ est  $J \cap D^1$. Donc $\pi(J,\lambda)$ est irr\'eductible (Remarque \ref{rem:irr}).
Il  existe $d$ dans $D^*$ non dans $JD^1$
car $p$ est impair, donc   il n'existe pas 
d'entrelacement de $\lambda|_{J \cap D^1}$ avec son conjugu\'e par $d$, donc  les deux composants $\pi(J,\lambda)$  et $ {}^d \pi(J,\lambda)$
 de $\Pi |_{ D^1}$ sont   irr\'eductibles et non \'equivalents. Il en d\'ecoule:
 
\begin{lemma}\label{le:srm}  Si $p$ est impair, alors  $\Pi |_{ D^1}$ est somme de deux $R$-repr\'esentations irr\'eductibles non \'equivalentes  de dimension $>1$.
\end{lemma}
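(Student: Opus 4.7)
Je combinerais la formule de Mackey avec le crit\`ere d'irr\'eductibilit\'e de l'appendice \S\ref{irr}. D'abord je confirmerais que $[D^*:JD^1]=2$ lorsque $p$ est impair: dans le cas $E/F$ non ramifi\'ee, parce que $JD^1$ ne contient aucun uniformisant $p_D$ de $D$; dans le cas $E/F$ ramifi\'ee, parce que $\nrd(J)\subset (F^*)^2(1+P_F)$ ne rencontre pas de g\'en\'erateur de $\mu_F$ d'ordre $q-1$, lequel existe puisque $p$ est impair. En fixant $d\in D^*\setminus JD^1$, la d\'ecomposition $D^*=JD^1\sqcup d\,JD^1$ donne par Mackey
\[
\Pi|_{D^1}=\pi(J,\lambda)\oplus {}^d\pi(J,\lambda),
\]
chaque composant de dimension $(\dim_R\Pi)/2>1$ d'apr\`es les formules explicites de $\dim_R\Pi$ rappel\'ees pr\'ec\'edemment.

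Ensuite, je d\'emontrerais l'irr\'eductibilit\'e de $\pi(J,\lambda)$ via le crit\`ere de la remarque \ref{rem:irr}, qui demande que l'entrelacement dans $D^1$ de $\lambda|_{J\cap D^1}$ soit exactement $J\cap D^1$. L'ingr\'edient essentiel, sp\'ecifique \`a $p$ impair, est que tout \'el\'ement de $1+P_F$ est un carr\'e. Comme $\nrd$ co\"{\i}ncide avec l'\'el\'evation au carr\'e sur le centre $Z=F^*$, on a $\nrd(Z\cap J_p)=(1+P_F)^2=1+P_F\supset \nrd(J_p)$, d'o\`u la factorisation $J_p=(Z\cap J_p)(D^1\cap J_p)$. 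Puisque $Z\cap J_p$ agit sur la repr\'esentation irr\'eductible $\lambda|_{J_p}$ par un caract\`ere, $\lambda|_{D^1\cap J_p}$ reste irr\'eductible; et comme l'entrelacement dans $D^*$ de $\lambda|_{J_p}$ est $J$ (celui de $\chi$, et donc de $\lambda$, l'\'etant par construction), son intersection avec $D^1$ donne $J\cap D^1$ pour l'entrelacement dans $D^1$ de $\lambda|_{D^1\cap J_p}$, puis de $\lambda|_{J\cap D^1}$ par restriction; ceci entra\^{\i}ne l'irr\'eductibilit\'e voulue.

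Enfin, le m\^eme calcul d'entrelacement, appliqu\'e \`a $\lambda|_{J\cap D^1}$ et \`a sa $d$-conjugu\'ee, montre qu'elles ne s'entrelacent pas dans $D^1$ (sinon $d$ appartiendrait \`a $JD^1$); la r\'eciprocit\'e de Frobenius donne alors $\Hom_{RD^1}(\pi(J,\lambda),{}^d\pi(J,\lambda))=0$, d'o\`u l'in\'equivalence. L'obstacle principal r\'eside dans la factorisation $J_p=(Z\cap J_p)(D^1\cap J_p)$, qui d\'epend crucialement de l'hypoth\`ese $p$ impair: en caract\'eristique r\'esiduelle $2$, $(1+P_F)^2$ est strictement plus petit que $1+P_F$ et cette strat\'egie \'echoue, justifiant un traitement s\'epar\'e du cas $p=2$.
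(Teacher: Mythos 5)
Your proof follows the paper's argument essentially verbatim: same Mackey decomposition of $\Pi|_{D^1}$ into $\pi(J,\lambda)\oplus {}^d\pi(J,\lambda)$ with each factor of dimension $(\dim_R\Pi)/2>1$, same factorization $J_p=(Z\cap J_p)(D^1\cap J_p)$ exploiting $(1+P_F)^2=1+P_F$ for $p$ odd, same use of the intertwining of $\lambda|_{J_p}$ plus Remark~\ref{rem:irr} to get irreducibility, and the same no-intertwining observation for non-equivalence. One small slip: in the ramified case your asserted inclusion $\nrd(J)\subset (F^*)^2(1+P_F)$ is false, since $\nrd(J)\supset N_{E/F}(E^*)$ contains a uniformizer of $F$ (an odd power of $p_F$); the correct and sufficient point, as in the paper, is simply that $\nrd(J)=\langle p_F\rangle\,\mu_F^2(1+P_F)$ does not contain a generator of $\mu_F$, hence $\nrd(J)\neq F^*$ and $[D^*:JD^1]=2$.
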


Si  $\charf_R=2 $ et  $p$  impair, le th\'eor\`eme principal est aussi d\'emontr\'e dans ce cas, car 
   la  $R$-repr\'esentation $JL(\Pi)|_{SL_2(F)}$  est aussi  somme de deux $R$-repr\'esentations irr\'eductibles non isomorphes  (voir \cite[Theorem 1.5]{HV25}).

\section{R\'eduction modulo $\ell$}\label{s:red}  
Toute $\mathbb Q_\ell^{ac}$-repr\'esentation lisse irr\'eductible de $D^1$ est enti\`ere car $D^1$ est compact. Sa  r\'eduction  modulo $\ell$  est aussi simple que possible.

\begin{theorem} 1) La r\'eduction modulo $\ell$ d'une $\mathbb Q_\ell ^{ac}$-repr\'esentation  irr\'eductible  lisse  de $D^1$ est toujours irr\'eductible (ce n'est pas vrai pour $D^*, GL_2(F), SL_2(F)$).
 
2) Toute $\mathbb F_\ell ^{ac}$-repr\'esentation  irr\'eductible   lisse  de $D^1$  est la r\'eduction modulo $\ell$ d'une $\mathbb Q_\ell ^{ac}$-repr\'esentation  irr\'eductible enti\`ere  lisse  de $D^1$ (c'est  vrai pour $D^*, GL_2(F), SL_2(F)$).
 \end{theorem}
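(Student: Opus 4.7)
Soit $\pi$ une $\mathbb{Q}_\ell^{ac}$-repr\'esentation lisse irr\'eductible de $D^1$. D'apr\`es \S\ref{ss:30}, $\pi$ est contenue dans $\Pi|_{D^1}$ pour une $\mathbb{Q}_\ell^{ac}$-repr\'esentation lisse irr\'eductible $\Pi$ de $D^*$, que l'on peut supposer enti\`ere quitte \`a la tordre par un caract\`ere $\chi \circ \nrd$ (cela ne modifie pas la restriction \`a $D^1$, et rend entier $\omega_\Pi$ pour un choix appropri\'e de $\chi$, tout caract\`ere lisse de $F^*$ admettant une racine carr\'ee dans $\mathbb{Q}_\ell^{ac}$). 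Nous distinguons alors des cas selon $\dim \Pi$. Pour $\dim \Pi = 1$, $\pi$ est triviale et sa r\'eduction aussi. Pour $\dim \Pi = 2$, les constructions sauvagement ramifi\'ees minimales de \S\ref{ss:direct} ayant toujours une dimension $\geq q+1 \geq 3$, $\Pi$ est n\'ecessairement triviale sur $1+P_D$ ; par le lemme \ref{le:mr}, $\Pi|_{D^1} = \pi(\nu) \oplus \pi(\nu^q)$ est somme de deux caract\`eres dont les r\'eductions sont des caract\`eres.

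Dans le cas $\dim \Pi > 2$ et $\ell = p$, on combine deux observations. Toute $\mathbb{F}_p^{ac}$-repr\'esentation lisse irr\'eductible de $D^*$ est triviale sur le pro-$p$ sous-groupe distingu\'e $1+P_D$ (existence de vecteurs fix\'es en caract\'eristique $p$), donc factorise par $D^*/(1+P_D)$ ; ce dernier admet $\langle p_F \rangle \times k_D^*$ comme sous-groupe ab\'elien distingu\'e d'indice $2$, for\c{c}ant la dimension des repr\'esentations irr\'eductibles \`a $\leq 2$. Comme la r\'eduction de $\Pi$ enti\`ere est de longueur $\leq 2$ (\S\ref{not}) \`a constituants de dimension $\leq 2$, on a $\dim \Pi \leq 4$ ; joint \`a l'implication « longueur $=2$ entra\^\i ne $\dim \Pi = 2$ » de \S\ref{not}, on obtient $\dim \Pi \leq 2$, contradiction : ce sous-cas ne se produit pas. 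Dans le cas $\dim \Pi > 2$ et $\ell \neq p$, on utilise la description $\Pi = \ind_J^{D^*} \lambda$ de \S\ref{ss:direct} ; les caract\`eres sous-jacents (en particulier $\chi$) prennent leurs valeurs dans les racines de l'unit\'e d'ordre une puissance de $p$, qui se plongent identiquement dans $\mathbb{Q}_\ell^{ac*}$ et $\mathbb{F}_\ell^{ac*}$ puisque $\ell \neq p$. Les conditions d'entrelacement dans $D^*$ et $D^1$ (cf. Remarque \ref{rem:irr}) qui caract\'erisent l'irr\'eductibilit\'e de $\ind_J^{D^*} \lambda$ et de $\pi(J,\lambda) = \ind_{J\cap D^1}^{D^1}(\lambda|_{J\cap D^1})$ ne d\'ependent que de $\chi$ et de la combinatoire associ\'ee, donc sont pr\'eserv\'ees par cette r\'eduction ; les r\'eductions des composants irr\'eductibles de $\Pi|_{D^1}$ sont par cons\'equent irr\'eductibles.

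Pour la partie 2), soit $\bar\pi$ une $\mathbb{F}_\ell^{ac}$-repr\'esentation lisse irr\'eductible de $D^1$. Par \S\ref{ss:30}, $\bar\pi$ est contenue dans $\bar\Pi|_{D^1}$ pour $\bar\Pi$ irr\'eductible de $D^*$ ; d'apr\`es \S\ref{not}, $\bar\Pi$ se rel\`eve en une repr\'esentation enti\`ere irr\'eductible $\Pi$ sur $\mathbb{Q}_\ell^{ac}$. La restriction \`a $D^1$ et la r\'eduction modulo $\ell$ commutent dans le groupe de Grothendieck (par compacit\'e de $D^1$), donc $[\bar\Pi|_{D^1}] = \sum_i [\bar\pi_i]$ o\`u les $\pi_i$ parcourent les composants irr\'eductibles de $\Pi|_{D^1}$. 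Par la partie 1), chaque $\bar\pi_i$ est irr\'eductible, et puisque $\bar\pi$ est un composant irr\'eductible de $\bar\Pi|_{D^1}$, il existe $j$ tel que $\bar\pi \cong \bar\pi_j$ ; le rel\`evement cherch\'e est $\pi := \pi_j$. L'obstacle principal r\'eside dans le cas sauvagement ramifi\'e pour $\ell \neq p$ de la partie 1) : il faut s'assurer que la condition d'entrelacement dans $D^1$ caract\'erisant l'irr\'eductibilit\'e de $\pi(J,\lambda)$ ne d\'epend que de donn\'ees combinatoires pr\'eserv\'ees par r\'eduction modulo $\ell$, ce qui n\'ecessite un examen attentif de la construction de \cite{V87} et des arguments de \S\ref{ss:direct}, notamment pour la r\'egularit\'e des caract\`eres et la structure de l'extension $E/F$.
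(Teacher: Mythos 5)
Your treatment of the character case and of part~2 tracks the paper, but the core of part~1 --- the case $\dim\Pi > 2$ with $\ell \neq p$ --- is handled by a genuinely different method, and that method has a real gap which you yourself flag at the end (``L'obstacle principal\dots''). The paper does not try to push the explicit induction data of \S\ref{ss:direct} through reduction modulo~$\ell$. Instead, for $\ell \neq 2$, it uses that the Galois--quaternion Langlands correspondence commutes with reduction modulo $\ell$ and with twists by characters, and compares the set $X(\Pi)$ of self-twist characters of $\Pi$ with $X(r_\ell(\Pi))$; \cite[Theorem 4.24]{HV25} shows these cardinalities can only jump from $2$ to $4$, and the decomposition statements of \S\ref{ss:31} then give irreducibility of the reductions by a short check on $|X(\Pi)| \in \{1,2,4\}$. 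For $\ell = 2$ (forcing $p$ odd) it applies Lemma~\ref{le:srm} over both $\mathbb Q_\ell^{ac}$ and $\mathbb F_\ell^{ac}$.

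Your alternative --- that the intertwining conditions guaranteeing the irreducibility of $\pi(J,\lambda)$ ``ne d\'ependent que de $\chi$ et de la combinatoire associ\'ee'' and hence survive reduction --- is asserted, not proved, and several supporting claims are incorrect or incomplete. The character $\chi$ does not take values in $p$-power roots of unity: only $\chi|_{1 + P_D^{[(f+1)/2]}}$ does, while $\chi|_{F^*}$ is the (arbitrary) central character. In the Heisenberg case ($f$ odd), $\lambda$ has dimension $q>1$, so you would have to show both that $r_\ell(\lambda|_{J\cap D^1})$ stays irreducible and that no new $D^1$-intertwining appears after reduction; neither is immediate. Finally, for $p=2$ the explicit analysis of \S\ref{ss:direct} does not yield the full decomposition of $\Pi|_{D^1}$ (the paper switches to the Langlands-side argument precisely for that reason), so a self-contained explicit-model proof would need substantially more work than your sketch, and more than the paper itself does via the twist-character count.
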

 
\begin{proof}[Preuve]  C'est \'evident pour les caract\`eres de $D^1$.

Consid\'erons   les  repr\'esentations irr\'eductibles  lisses   de $D^1$  de dimension $>1$. Donc $\ell \neq p$. Nous avons montr\'e en  \S\ref{ss:direct} qu'elles sont contenues dans les restrictions \`a $D^1$ des  repr\'esentations irr\'eductibles  lisses   $\Pi $ de $D^*$  de dimension $>2$.
Si  $\Pi$ est $\ell$-adique, on peut supposer que $ \Pi $ est enti\`ere 
(que son caract\`ere central est entier).   On rappelle  (voir \cite{V87})    que   $r_\ell (\Pi)$  est irr\'eductible car la dimension de $\Pi$ est $>2$,  que la correspondance de Langlands  Galois-Quaternions commute avec la r\'eduction modulo $\ell$,  et  que toute repr\'esentation  irr\'eductible lisse  $\ell$-modulaire de $D^*$ est la r\'eduction modulo $\ell$ d'une  repr\'esentation  irr\'eductible $\ell$-adique enti\`ere  lisse.  

Si $\ell \neq 2$, on note $X(\Pi)$ l'ensemble des caract\`eres  lisses $\ell$-adique $\chi$ de $F^*$ v\'erifiant \eqref{eq:QGG} et  $X(r_\ell(\Pi))$ l'ensemble des caract\`eres  lisses  $\ell$-modulaires de $F^*$ v\'erifiant \eqref{eq:QGG} pour  $r_\ell(\Pi))$. Les cardinaux  $|X(\Pi)|$ et 
$|X(r_\ell(\Pi))|$ sont diff\'erents si et seulement si $|X(\Pi)|=2$ et $|X(r_\ell(\Pi))|=4$  (voir \cite[Theorem 4.24]{HV25} et utiliser que la correspondance de Langlands Galois-Quaternions commute avec la torsion par des caract\`eres lisses). 
Par  \S \ref{ss:31}, on a:

Si $|X(\Pi)|=1$  donc $|X(r_\ell(\Pi))|=1$  alors la r\'eduction modulo $\ell$ de $\Pi|_{D^1} $ est irr\'eductible. 
 
 Si $|X(\Pi)|=2$, alors $\Pi|_{D^1} $ est somme de deux repr\'esentations irr\'eductibles non \`equivalentes   de  r\'eductions irr\'eductibles modulo $\ell$ non \'equivalentes si 
  $|X(r_\ell(\Pi))|=2$ et \'equivalentes si  $|X(r_\ell(\Pi))|=4$.
  
 Si $|X(\Pi)|=4$  donc  $|X(r_\ell(\Pi))|=4$, alors $\Pi|_{D^1} $ est somme  de deux repr\'esentations  \`equivalentes   de  r\'eductions irr\'eductibles modulo $\ell$. On en d\'eduit le th\'eor\`eme si $\ell \neq 2$. 
 
Si $\ell=2$ alors $p$ est impair. Si $p$ est impair,    le lemme \ref{le:srm}  pour $\mathbb Q_\ell^{ac}$ et $\mathbb F_\ell^{ac}$,  implique que $\Pi|_{D_1} $ est la somme de deux repr\'esentations de $D_1$ de r\'eduction modulo $\ell$ irr\'eductibles  et non \'equivalentes. On en d\'eduit le th\'eor\`eme si $p$ est impair, donc si $\ell =2$. 
  \end{proof}

  \section{Param\`etres de Langlands \'etendus}\label{CLE}
Soit $R$ un corps alg\'ebriquement clos de caract\'eristique  $\charf_R\neq p$. La correspondance de Langlands  pour $D^*$ (section de rappels) est une bijection qui associe \`a
 une $R$-repr\'esentation lisse irr\'eductible $\Pi $ de $D^*$, de dimension $>1$, une $R$-repr\'esentation lisse irr\'eductible
$\sigma(\Pi)$ de $W_F$, de dimension $2$, bien d\'efinie \`a isomorphisme pr\`es.
Cette bijection  est compatible \`a la torsion par les $R$-caract\`eres lisses de $F^*$, donc la classe de conjugaison
du morphisme $\overline \sigma_\Pi$ de $W_F$ dans $PGL_2(R)$ d\'eduit de $\sigma_\Pi$ ne d\'epend que de la restriction de $\Pi$ \`a $D^1$.
Ce morphisme est lisse, et elliptique au sens o\`u il ne prend pas ses valeurs dans un tore de $PGL_2(R)$. 
Comme tout tel morphisme se rel\`eve en une $R$-repr\'esentation lisse irr\'eductible de $W_F$ \cite{HV25}, on obtient
ainsi une bijection entre les $L$-paquets non triviaux pour $D^1$ et les classes de conjugaison de morphismes lisses elliptiques de $W_F$ dans $PGL_2(R)$. C'est ce qu'on peut appeler
la correspondance de Langlands pour $D^1$.
La question se pose alors de savoir si l'on peut \'etendre cette correspondance au sens o\`u, pour une $R$-repr\'esentation
lisse irr\'eductible $\Pi$ de dimension $>1$ de $D^*$, l'on peut   indexer les classes d'isomorphismes des composants irr\'eductibles de $\Pi|_{D^1}$ en termes de $\overline \sigma_\Pi$. Pour $R=\mathbb C$, voir \cite{HS12}, \cite{ABPS16}, \cite{AMPS17}. Pour $\charf_R\neq p$ nous avons \'etudi\'e la question analogue
pour $SL_2(F)$, et montr\'e que les r\'esultats pour $R=\mathbb C$ sont \'egalement valables si et seulement si  $\charf_R\neq 2$ \cite{HV25}.
Nous analysons ici le cas de $D^1$ pour toute caract\'eristique de $R$. Vogan  a sugg\'er\'e de consid\'erer non le centralisateur de $\overline \sigma_\Pi$
dans $PGL_2(R)$, comme nous l'avons fait pour $SL_2(F)$, mais plut\^ot son centralisateur $\mathcal C_\Pi$ dans $SL_2(R)$ (qui est fini), et les repr\'esentations de ce
centralisateur dont la restriction au centre de $SL_2(R)$ est fid\`ele.

Supposons d'abord $\charf_R\neq 2$. On peut d\'eterminer $\mathcal C_\Pi$.
Dans le  cas o\`u $d_\Pi=1$, qui ne se produit que pour $p=2$, l'image de $W_F$ dans $PGL_2(R)$ est isomorphe \`a $A_4$ ou $S_4$
et $\mathcal C_\Pi$ est le centre de $SL_2(R)$. 
Dans le  cas o\`u $d_\Pi=2$, l'image de $W_F$ dans $PGL_2(R)$ est un groupe di\'edral de cardinal
$2m>4$, et, quitte \`a conjuguer dans $PGL_2(R)$, le sous-groupe cyclique d'ordre $m$ est diagonal, de sorte que $\mathcal C_\Pi $ 
est engendr\'e par l'antidiagonale  $(1,-1) $ (voir le calcul dans \cite{HV25}), un \'el\'ement d'ordre $4$ de $SL_2(R)$. Ainsi $\mathcal C_\Pi$ a deux $R$-caract\`eres fid\`eles
qui devraient correspondre aux deux composants irr\'eductibles de $\Pi|_{D^1}$. Mais contrairement au cas de $SL_2(F)$, nous
ne disposons pas des mod\`eles de Whittaker pour choisir une des composantes irr\'eductibles de $\Pi|_{D^1}$.
M\^eme pour $R=\mathbb C$ un tel choix demande certainement des donn\'ees suppl\'ementaires, soit globales comme le mentionne
\cite[dernier paragraphe]{L24}, soit locales. Enfin dans le  cas $d_\Pi=4$, l'image de $W_F$ dans $PGL_2(R)$ est le groupe de Klein, et $\mathcal C_\Pi$
est le groupe quaternionien d'ordre $8$. Ce groupe a, \`a isomorphisme pr\`es, une seule
$R$-repr\'esentation irr\'eductible dont la restriction au centre est fid\`ele. Elle est de dimension $2$, ce qui interpr\`ete le fait que 
la multiplicit\'e du composant irr\'eductible de $\Pi|_{D^1}$ est $2$.

Supposons pour terminer que  $\charf_R=2$. Nous  avons  $d_\Pi=2$ et $\mathcal C_\Pi$
est cyclique d'ordre $2$, mais il ne poss\`ede qu'un seul $R$-caract\`ere, le caract\`ere trivial. On n'a donc pas, en ce cas,
une correspondance de Langlands \'etendue comme dans le cas $\charf_R\neq 2$
\section{Appendice:  Crit\`eres   d'irr\'eductibilit\'e}\label{irr}
 
Soit $R$ un corps commutatif, $G$ un groupe localement profini, $J$ un sous-groupe ouvert de $G$,  et $\lambda$ une $R$-repr\'esentation irr\'eductible lisse de $J$. On s'int\'eresse \`a l'irr\'eductibilit\'e de l'induite compacte $\ind_J^G\lambda$ de $\lambda$ \`a $G$  \cite{V96}.

\bigskip Regardons  les endomorphismes de $\ind_J^G\lambda$. L'induite compacte $\ind_J^G(\lambda) $ est contenue dans l'induite  $\Ind_J^G(\lambda)$ de $\lambda$ \`a $G$.
Les fonctions dans $\ind_J^G\lambda$ \`a support dans $JgJ$ pour $g\in G$ forment une sous-repr\'esentation $\ind_J^{JgJ} \lambda$ de $\ind_J^G(\lambda)|_J$.  On d\'efinit de m\^eme $\Ind_J^{JgJ} \lambda\subset \Ind_J^G(\lambda)|_J$. L'induction compacte  $\ind_J^G$ est l'adjointe \`a gauche de la restriction de $G$ \`a $J$, tandis que l'induction $\Ind_J^G$ est son adjointe \`a droite \cite[\S 5.7]{V96}. 

On a donc $\End_{RG}(\ind_J^G\lambda) \subset \Hom_{RG}(\ind_J^G\lambda, \Ind_J^G\lambda) $ avec:
\begin{equation}\label{Ii}\Hom_{RG}(\ind_J^G\lambda, \Ind_J^G\lambda) \simeq \Hom_{RJ}(\ind_J^G \lambda, \lambda ) =  \Hom_{RJ}(\oplus_{JgJ}\ind_J^{JgJ} \lambda,\lambda)
\end{equation}
$$\simeq \prod _{JgJ} \Hom_{RJ}(\ind_J^{JgJ} \lambda,\lambda)\simeq \prod _{JgJ} \Hom_{R(J\cap {}^g J)}( {}^g \lambda,  \lambda).$$
\begin{equation}\label{ii}\End_{RG}(\ind_J^G\lambda) \simeq \Hom_{RJ}(\lambda, \ind_J^G \lambda) =  \Hom_{RJ}(\lambda, \oplus_{JgJ}\ind_J^{JgJ} \lambda)
\end{equation}
$$\simeq \oplus_{JgJ} \Hom_{RJ}(\lambda,\ind_J^{JgJ} \lambda) \ \text{car $\lambda$ est irr\'eductible}.$$

Si $JgJ/J$ est compact, alors $ \ind_J^{JgJ} \lambda= \Ind_J^{JgJ} \lambda$ et par adjunction le dernier terme est isomorphe \`a $\oplus_{JgJ} \Hom_{R(J\cap {}^g J)}( \lambda, {}^g \lambda)$.
 
  La contribution de $\End_{RJ}\lambda  $ dans $\End_{RG}( \ind_J^G \lambda) $ est donn\'ee par l'induction compacte $\ind_J^G(\iota)$ o\`u $\iota\in \End_{RJ}\lambda$.

 \begin{lemma}\label{cri2} On suppose: 
  
  a)  Les propri\'et\'es \'equivalentes ci-dessous sont v\'erifi\'ees:
  
  $\bullet$ $\End_{RG}( \ind_J^G \lambda) = \End_{RJ}\lambda  $.
  
 $\bullet$   Le composant $\lambda$-isotypique de  $(\ind_J^G\lambda)|_J$ est \'egal \`a $\lambda$.

$\bullet$   $J=\{ g \in G \ | \ \Hom_{RJ}(\lambda, \ind_J^{JgJ}\lambda)\neq 0\}$. 
     
  b) Pour toute sous-repr\'esentation lisse $\pi$ de $\ind_J^G(\lambda)$, si $\lambda$ est quotient de $\pi|_J$ alors $\lambda$ est une sous-repr\'esentation de $\pi|_J$.
  
\noindent Alors la repr\'esentation $\ind_J^G \lambda$ est irr\'eductible.
   \end{lemma}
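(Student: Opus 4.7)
Le plan est de proc\'eder en trois \'etapes courtes.

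\emph{\'Equivalence des trois conditions dans (a).} Elle se d\'eduit directement de la d\'ecomposition \eqref{ii} rappel\'ee plus haut: on a $\End_{RG}(\ind_J^G\lambda) \simeq \oplus_{JgJ} \Hom_{RJ}(\lambda, \ind_J^{JgJ}\lambda)$, o\`u la double classe triviale $J = J\cdot 1\cdot J$ contribue exactement $\End_{RJ}\lambda$ via le morphisme naturel $\iota \mapsto \ind_J^G(\iota)$. La premi\`ere condition \'equivaut donc \`a l'annulation des autres termes de la somme directe, c'est-\`a-dire \`a la troisi\`eme. Par ailleurs, la r\'eciprocit\'e de Frobenius donne $\End_{RG}(\ind_J^G\lambda) \simeq \Hom_{RJ}(\lambda, (\ind_J^G\lambda)|_J)$, et cet espace vaut $\End_{RJ}\lambda$ pr\'ecis\'ement quand la multiplicit\'e de $\lambda$ dans $(\ind_J^G\lambda)|_J$ est $1$, ce qui est la deuxi\`eme condition.

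\emph{Plongement de $\lambda$ dans une sous-repr\'esentation arbitraire.} Soit $\pi$ une sous-repr\'esentation non nulle de $\ind_J^G\lambda$; on veut montrer $\pi = \ind_J^G\lambda$. Pour $f\in \pi$ non nulle, il existe $g\in G$ tel que $f(g)\neq 0$; quitte \`a remplacer $f$ par sa translat\'ee par $g$ (qui reste dans $\pi$), on peut supposer $f(1)\neq 0$. L'\'evaluation $f\mapsto f(1)$ fournit un morphisme $J$-\'equivariant non nul de $\pi|_J$ vers $\lambda$, donc surjectif puisque $\lambda$ est irr\'eductible. Ainsi $\lambda$ est un quotient de $\pi|_J$, et l'hypoth\`ese (b) entra\^ine qu'il en est aussi une sous-repr\'esentation.

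\emph{Conclusion via (a) et le lemme de Schur.} La r\'eciprocit\'e de Frobenius rel\`eve l'inclusion $\lambda\hookrightarrow \pi|_J$ en un $G$-morphisme non nul $\psi: \ind_J^G\lambda \to \pi$. Compos\'e avec l'inclusion $\pi\hookrightarrow \ind_J^G\lambda$, on obtient un endomorphisme non nul de $\ind_J^G\lambda$ que l'hypoth\`ese (a) identifie \`a $\ind_J^G(\iota)$ pour un certain $\iota\in \End_{RJ}\lambda$ non nul. Comme $\lambda$ est irr\'eductible, le lemme de Schur fait de $\End_{RJ}\lambda$ une alg\`ebre \`a division, donc $\iota$ est inversible et $\ind_J^G(\iota)$ est un automorphisme de $\ind_J^G\lambda$. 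Il s'ensuit que $\psi$ est surjectif, et donc $\pi = \ind_J^G\lambda$.

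Le point qui demande le plus d'attention est l'articulation pr\'ecise entre (a) et (b): (b) sert \`a convertir le quotient $\lambda$ de $\pi|_J$ en une sous-repr\'esentation, condition n\'ecessaire pour appliquer Frobenius et obtenir $\psi$, tandis que (a) permet ensuite d'identifier la composition $\pi\hookrightarrow \ind_J^G\lambda \xleftarrow{\psi}\ind_J^G\lambda$ \`a un \'el\'ement de l'alg\`ebre \`a division $\End_{RJ}\lambda$, n\'ecessairement inversible. Le reste n'est qu'une manipulation standard des adjonctions d\'ej\`a explicit\'ees dans le texte.
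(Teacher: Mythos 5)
Your proof is correct and follows essentially the same path as the paper's: use the $\Ind$-adjunction (made explicit via the evaluation map $f\mapsto f(1)$) to realize $\lambda$ as a quotient of $\pi|_J$, apply (b) to upgrade it to a subrepresentation, use Frobenius reciprocity to produce an endomorphism of $\ind_J^G\lambda$ with image in $\pi$, and invoke (a) together with Schur's lemma to conclude it is invertible. The added verification of the equivalence of the three conditions in (a) is a welcome extra that the paper leaves implicit.
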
  
     \begin{proof}[Preuve] On choisit une sous-repr\'esentation non-nulle $X$ de $\ind_J^G(\lambda)$. Comme $\ind_J^G(\lambda) \subset \Ind_J^G(\lambda)$, par adjunction, $\lambda$ est un quotient de $X|_J$. Par b), $\lambda$ est une sous-repr\'esentation de $X|_J$. 
  Donc il existe un endomorphisme de $\ind_J^G \lambda $ d'image non nulle $Y$ contenue dans $X$.  Par a), cet isomorphisme est de la forme $\ind_J^G(\iota)$ pour un endomorphisme non nul $\iota$ de $\lambda$. Comme $\lambda$ est irr\'eductible, $\iota \in \End_{RJ}\lambda$ est inversible, donc $\ind_J^G(\iota)\in \End_{RG}(\ind_J^G\lambda)$ est inversible et $\ind_J^G\lambda=Y=X $. Donc 
  $\ind_J^G(\lambda)$ est irr\'eductible.
  \end{proof}

Le crit\`ere simple d'irr\'eductibilit\'e   \cite[Lemma 3.2]{V00} lorsque   $R$ est alg\'ebriquement clos,  est un cas particulier  d'une variante \ref{var} du  lemme  \ref{cri2}.

  \begin{variant}\label{var} On suppose:  

 a') L"ensemble d'entrelacement de $\lambda$ dans $G$ est $J$.
   
  b') la propi\'et\'e b) o\`u l'on a permut\'e quotient et sous-repr\'esentation.  
  
  \noindent Alors la repr\'esentation $\ind_J^G \lambda$ est irr\'eductible.    \end{variant}
  On rappelle que  l"ensemble d'entrelacement de $\lambda$ dans $G$ est  $$\{ g\in G \ | \ \Hom_{R(J\cap {}^g J)}(\lambda,  {}^g  \lambda)\neq 0.\}.$$
 Il est \'egal \`a $J$  si et seulement si  (en conjuguant par  $g^{-1}$):

  $J= \{ g\in G \ | \ \Hom_{R(J\cap {}^g J)}( {}^g \lambda,  \lambda)\neq 0\}$
   si et seulement si    (par la formule \eqref{Ii}):

$\Hom_{RG}(\ind_J^G\lambda, \Ind_J^G\lambda)=\End_{RJ}\lambda$ qui implique (avec la formule \eqref{ii}):

$J=\{ g \in G \ | \ \Hom_{RJ}(\lambda, \ind_J^{JgJ}\lambda)\neq 0\}$ et $\Hom_{RG}(\ind_J^G\lambda, \Ind_J^G\lambda)=\End_{RG}( \ind_J^G \lambda) $.

\medskip 
    \begin{proof}[Preuve]   Soit $Y$ un quotient non nul de $\ind_J^G \lambda$. Par adjonction $\lambda$ est une sous-repr\'esentation de $Y|_J$. Par b') $\lambda$ est un quotient de $Y|_J$, donc par adjonction  $\Hom_{RG}(Y, \Ind_J^G \lambda) \neq 0$, et il existe un homomorphisme non nul de $\ind_J^G \lambda$ dans $\Ind_J^G\lambda$.  Son image est dans $\ind_J^G \lambda$. On termine comme dans la preuve du lemme \ref{cri2}.     \end{proof}

 Si $J$ est compact et de pro-ordre inversible dans $R$, la propri\'et\'e b) est \'evidente. Le crit\`ere suivant se passe de cette hypoth\`ese b).
 
Soit $J^1$ un sous-groupe compact de $J$ de pro-ordre inversible dans $R$ (de sorte que toutes les $R$-repr\'esentations lisses de $J^1$ sont semi-simples),  et $\mu$ une  $R$-repr\'esentation lisse irr\'eductible de $J^1$. 

Le lemme suivant est un raffinement du crit\`ere d'irr\'eductibilit\'e du lemme \ref{cri2}.  
\begin{lemma}\label{cri1} On suppose: 
 
c)  La   restriction de $\lambda$  \`a $J^1$ est $\mu$-isotypique de multiplicit\'e finie $m$.

d) Le composant $\mu$-isotypique de la   restriction   \`a $J^1$ de $\ind_J^G\lambda$ est de multiplicit\'e $m$.

\noindent Alors la repr\'esentation $\ind_J^G\lambda$ est irr\'eductible.

\end{lemma}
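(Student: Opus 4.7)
The plan is to exploit the restriction to $J^1$, together with the fact that smooth $R$-representations of $J^1$ are semisimple (since the pro-order of $J^1$ is invertible in $R$). Write $V = \ind_J^G \lambda$, and decompose $V|_{J^1}$ using representatives of the double cosets $J^1 \backslash G / J$. The contribution from the trivial double coset $J^1 \cdot J = J$ consists of functions supported on $J$ and is canonically identified with $\lambda|_{J^1}$, which by (c) is $\mu$-isotypique of multiplicity $m$. Hypothesis (d) then forces every other double coset to contribute zero to the $\mu$-isotypic part of $V|_{J^1}$, so the $\mu$-isotypic component $V^\mu \subseteq V$ coincides with the subspace of functions supported on $J$. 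This subspace is $J$-stable, not just $J^1$-stable, and equals $\lambda$ as a $J$-representation; this is the first crucial point.

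Next, I would take an arbitrary nonzero $RG$-subrepresentation $X$ of $V$. Composing the inclusion $X \hookrightarrow V$ with $V \hookrightarrow \Ind_J^G \lambda$ and applying Frobenius reciprocity (with $\Ind$ as right adjoint to restriction) gives a nonzero, hence surjective, $J$-equivariant map $X|_J \twoheadrightarrow \lambda$. Restricting further to $J^1$ produces a surjection $X|_{J^1} \twoheadrightarrow \lambda|_{J^1} \simeq \mu^{\oplus m}$, so by semisimplicity of smooth $J^1$-representations the $\mu$-isotypic multiplicity of $X|_{J^1}$ is at least $m$. The inclusion $X \subseteq V$ combined with (d) gives the reverse inequality, so $X^\mu := (X|_{J^1})^\mu$ has $\mu$-multiplicity exactly $m$; combined with $X^\mu \subseteq V^\mu$, this forces $X^\mu = V^\mu$. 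By the first paragraph, $V^\mu$ is already $J$-stable and equal to $\lambda$, hence $\lambda \subseteq X$ as $J$-subrepresentations of $V$.

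To conclude, I would observe that $\lambda$ generates $V$ as an $RG$-module: every element of $\ind_J^G \lambda$ is a finite sum of functions supported on single cosets $gJ$, each of which is a $G$-translate of a function supported on $J$. Hence any $G$-subrepresentation $X$ containing $\lambda$ must be all of $V$, which proves that $V = \ind_J^G \lambda$ is irreducible.

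The main obstacle I expect is the very first step: using hypothesis (d) to rigidify $V^\mu$ into the genuine $J$-subrepresentation $\lambda$, rather than some a priori larger $J^1$-invariant subspace spread across other double cosets. Once this Mackey-style bookkeeping on $V|_{J^1}$ is in place, the remainder is routine — a standard Frobenius-reciprocity / semisimplicity argument, followed by the observation that $V$ is cyclic as a $G$-module generated by $\lambda$. Note that, in contrast with Lemma \ref{cri2} and its Variante \ref{var}, no hypothesis on pro-orders of $J$ itself, nor an intertwining-type hypothesis on $\lambda$, is required; everything is shifted onto the smaller group $J^1$ where semisimplicity is automatic.
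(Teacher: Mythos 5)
Your proof is correct, but it follows a more elaborate route than the paper's. The paper argues directly by contradiction: if $\ind_J^G\lambda$ were reducible, choose a nonzero proper $G$-subrepresentation $X$ with nonzero quotient $Y$; Frobenius reciprocity for $\ind$ (left adjoint to restriction) gives $\lambda$ as a subrepresentation of $Y|_J$, while Frobenius reciprocity for $\Ind$ (right adjoint) gives $\lambda$ as a quotient of $X|_J$; restricting to $J^1$ and invoking semisimplicity, each of $X|_{J^1}$ and $Y|_{J^1}$ has $\mu$-multiplicity at least $m$, and since the sequence $0\to X\to \ind_J^G\lambda \to Y\to 0$ splits over $J^1$, the $\mu$-multiplicity of $(\ind_J^G\lambda)|_{J^1}$ is at least $2m$, contradicting (d). Your version instead first pins down, via the Mackey decomposition of $(\ind_J^G\lambda)|_{J^1}$ over $J^1\backslash G/J$, that the $\mu$-isotypic component is exactly the $J$-stable copy of $\lambda$ given by functions supported on $J$ (this is precisely the first bullet of Remark \ref{rem:irr}), and then closes by combining this with the cyclicity of compact induction ($\lambda$ generates $\ind_J^G\lambda$ as a $G$-module). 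Both are valid. The paper's proof is shorter because it dispenses with the Mackey bookkeeping and with the generation argument; yours has the advantage of making explicit where the $\mu$-isotypic component sits inside $\ind_J^G\lambda$, information the paper records separately in the Remark rather than inside the proof.
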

\begin{proof}[Preuve]  Si $\ind_J^G(\lambda)$  est r\'eductible, on choisit une sous-repr\'esentation non-nulle $X$ de $\ind_J^G(\lambda)$ telle que le quotient $Y$ soit non nul.  Par adjonction  $\lambda$ est une sous-repr\'esentation de $Y|_J$ et un quotient de   $ X|_J$. Donc la multiplicit\'e de $\mu$ dans la restriction \`a $J^1$ de $Y$ et de $X$ est au moins $2m$. Ceci contredit d).
 \end{proof}
 
 \begin{remark} \label{rem:irr} 
 
 $\bullet$   c) et  d)  impliquent  que le  composant $\mu$-isotypique de  $\ind_J^G\lambda|_{J^1}$ est la sous-repr\'esentation $\lambda$ de $\ind_J^G\lambda|_{J} $, donc c) et d) impliquent a). 
   
 $\bullet$  Si  $\lambda| _{J^1}$ est irr\'eductible (isomorphe \`a $\mu$, $m=1$) et son entrelacement dans $G$ est \'egal \`a $J$ alors  $\Hom_{RJ^1}(\mu, \ind_J^{JgJ^1}\lambda)=0$ si $g\not\in J$ donc 
   d) est v\'erifi\'e.
  
  \end{remark}
  
 \`A notre connaissance, un crit\`ere d'irr\'eductibilit\'e proche de celui du lemme \ref{cri1} est utilis\'e dans toutes les constructions explicites de  repr\'esentations irr\'eductibles cuspidales de groupes r\'eductifs $p$-adiques sur un corps commutatif $R$ de caract\'eristique  $\ell \neq p$ (sinon on n'a pas besoin de $J^1$) \cite{HV22}.  Il  est appliqu\'e avec  $J$ compact modulo le centre de $G$ et $J^1$ est un pro-$p$ sous-groupe ouvert de $J$. 
 
\section{Appendice: Commutateurs de $D^1$}\label{com}
Dans cet appendice, on consid\'ere un corps gauche $D$ de centre $F$ et de degr\'e $d^2$ sur $F$ and d'invariant de Hasse $r/d$ for $1\leq r\leq d, (r,d)=1$.  On utilise  les notations traditionnelles $O_D, P_D, k_D,  \rho, U_D$  d\'ej\`a utilis\'ees  quand $D$ est un corps de quaternions (Notations \ref{not}). On note $U^0_D=U_D, U^i_D=1+P_D^i$ pour $i>0$.
On choisit une racine de l'unit\'e $\omega\in D$ d'ordre $q^d-1$ et un g\'en\'erateur $p_D$ de $P_D$ tel que $p_D^d=p_F, \ p_D \omega p_D^{-1}=\omega^{q^r}$.  L'extension $E=F(\omega)$ de $F$ est non ramifi\'ee de degr\'e $d$.  On note $D^1$ le noyau de $\nrd$. \begin{theorem} Tout \'el\'ement de $D^1\cap U^1_D$ est produit de deux commutateurs de $D^1$.
\end{theorem}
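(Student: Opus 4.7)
The plan is to filter $D^1$ by $G_i := D^1 \cap U_D^i$ for $i \geq 1$, analyse each graded quotient $G_i/G_{i+1}$, and prove the statement by combining one ``top-level'' commutator with a deep Lie-theoretic second commutator. The basic facts are that $[G_i, G_j] \subseteq G_{i+j}$, and that the composite $G_i \hookrightarrow U_D^i \to k_D$ sending $1 + p_D^i y$ to $\rho(y)$ identifies $G_i/G_{i+1}$ with all of $k_D$ when $d \nmid i$, and with $\ker \tr_{k_D/k_F}$ when $d \mid i$; the latter restriction comes from
\[
\nrd(1 + p_D^i y) \equiv 1 + \trd(p_D^i y) \equiv 1 + p_F^{i/d}\, \trd(y) \pmod{P_F^{i/d+1}},
\]
which forces $\tr_{k_D/k_F}(\rho(y)) = 0$.

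First I would show that at each filtration level, commutators of elements of $D^1$ realize the full graded image. Writing $\sigma$ for the Frobenius $x \mapsto x^q$ of $k_D$, a direct computation in $U_D^i/U_D^{i+1}$ gives, for $a \in D^1 \cap U_D$ and $b = 1 + p_D^i y \in G_i$,
\[
[a, b] \equiv 1 + p_D^i\, \rho(y)\, \bigl(\sigma^{-ri}(\bar a)/\bar a - 1\bigr) \pmod{G_{i+1}}.
\]
When $d \nmid i$ I would choose $\bar a \in k_D^1$ outside the fixed subfield of $\sigma^{ri}$ (such $\bar a$ exist because $|k_D^1| = (q^d-1)/(q-1)$ exceeds the norm-one subgroup of any proper subfield), so that multiplication by $\sigma^{-ri}(\bar a)/\bar a - 1$ is bijective on $k_D$ and varying $b$ sweeps the whole quotient. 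When $d \mid i$ I would instead use commutators $[a, b]$ with $a \in G_j$, $b \in G_{i-j}$, and $\gcd(j, d) = 1$: the leading term is $\sigma^{rj}(\bar\alpha)\bar\beta - \sigma^{-rj}(\bar\beta)\bar\alpha$, which lies in $\ker \tr_{k_D/k_F}$ automatically, and the specialisation $\bar\beta = 1$ produces the $k_F$-linear surjection $\bar\alpha \mapsto \sigma^{rj}(\bar\alpha) - \bar\alpha$ onto this kernel.

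Next I would fix an integer $N > e_D/(p-1)$, so that $\exp\colon P_D^N \to U_D^N$ is a bijection intertwining the Lie bracket with the group commutator up to Baker-Campbell-Hausdorff corrections in $P_D^{\lceil 3N/2\rceil}$. The previous graded surjectivity, together with Hensel's lemma applied to the commutator map $(a, b) \mapsto [a, b]$ on the pro-$p$ group $D^1/G_N$, bundles the data at all levels $1 \le i < N$ into a single pair $(a_1, b_1) \in D^1 \times D^1$ whose commutator $c_1 := [a_1, b_1]$ matches $x$ modulo $G_N$. Setting $y := x c_1^{-1} \in G_N$ and $Y := \log y \in P_D^N \cap \mathfrak{d}^0$, where $\mathfrak{d}^0 := \ker \trd$, the classical theorem of Shoda that every trace-zero element of $D \otimes_F F^{ac} \cong M_d(F^{ac})$ is a single Lie bracket, combined with a second Hensel iteration in $\mathfrak{d}^0$, produces $U, V \in \mathfrak{d}^0$ with $\val_D(U), \val_D(V) \geq \lceil N/2\rceil$ and $[U, V] = Y$ exactly. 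Then $c_2 := [\exp U, \exp V] \in [D^1, D^1]$ and $x = c_1 c_2$.

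The main obstacle will be the two Hensel lifting steps, and especially the second one: solving $[U, V] = Y$ in $\mathfrak{d}^0$ \emph{exactly} rather than merely modulo some deeper $P_D^k$. The surjectivity of the derivative $(\dot U, \dot V) \mapsto [\dot U, V] + [U, \dot V]$ at a pair with $U$ regular semisimple is the Lie-algebraic content of Shoda's theorem and suffices once one knows that the higher-order Baker-Campbell-Hausdorff tail lies in $P_D^{\lceil 3N/2\rceil} \cap \mathfrak{d}^0$, which follows from $\trd$-invariance of the Lie bracket. Bundling the graded commutator data at distinct levels into a single first commutator $c_1$ relies on the same principle transported to the group level, where the surjectivity of the commutator derivative is inherited from the graded statement; compactness of $D^1$ ensures the Hensel iteration converges inside $D^1$.
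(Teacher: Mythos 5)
Your graded analysis is correct and matches the paper's: the map $\varrho$ identifies $G_i/G_{i+1}$ with $k_D$ when $d\nmid i$ and with $\ker\tr_{k_D/k_F}$ when $d\mid i$, and the two kinds of elementary commutators you write down (unit first component for $d\nmid i$, two deep factors for $d\mid i$) are essentially the $(z,\cdot)$ and $(1+p_D,\cdot)$ commutators the paper uses at the graded level. The difficulty is in how you combine them.

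Your first Hensel step is a genuine gap. You want a \emph{single} pair $(a_1,b_1)$ whose commutator matches $x$ modulo $G_N$, and you appeal to Hensel's lemma for the map $(a,b)\mapsto[a,b]$ on $D^1/G_N$. But the differential of this map in the $b$-direction at $(a_1,b_1)$ is $\Ad(a_1)-1$, and if $a_1$ is a unit with reduction $\bar a_1\in k_D^1$, then $\Ad(a_1)-1$ acts on $P_D^i/P_D^{i+1}\cong k_D$ by multiplication by $\bar a_1^{1-q^{ri}}-1$, which is \emph{zero} whenever $d\mid i$. So the derivative is not surjective across all levels $1\le i<N$: precisely the obstruction you identified and handled separately in the graded case reappears, and it prevents the two cases from being swept by one commutator. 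The derivative in the $a$-direction does not rescue you either, because if $b_1$ is close to $1$ that contribution also sits at the wrong depth. The paper avoids this entirely by never trying to bundle: it fixes \emph{two} elements, $z$ of residual order $(q^d-1)/(q-1)$ and $t=(1+p_D)u^{-1}\in D^1$ with $t\equiv 1+p_D\pmod{p_D^2}$, and runs a single successive-approximation loop that writes $a=(z,b)(t,c)$, correcting $b$ at levels $d\nmid i$ and $c$ at levels $d\mid i$. That is the two-commutator structure of the statement made manifest, and it needs no implicit-function theorem beyond the trivial convergence of a compatible sequence in a profinite group.

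Your second step has a descent gap. Shoda's theorem gives $Y=[U',V']$ with $U',V'\in M_d(F^{\mathrm{ac}})$, but you need $U,V$ in $\mathfrak d^0\subset D$, and there is no canonical way to push a solution over $F^{\mathrm{ac}}$ down to the anisotropic form $\mathfrak{sl}_1(D)$. Your fallback, surjectivity of $(\dot U,\dot V)\mapsto[\dot U,V]+[U,\dot V]$ at a regular semisimple $U\in\mathfrak d^0$, is plausible but is not ``the Lie-algebraic content of Shoda's theorem''; it is a separate statement about the anisotropic Lie algebra that you would have to prove, with integral control of the derivative's cokernel to feed Hensel. None of this machinery (exp/log, BCH, Shoda, Newton iteration) appears in the paper's proof, which is entirely elementary. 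The two approaches do agree on the graded skeleton, but yours replaces the paper's explicit alternating approximation with two heavy lifting steps, both of which are left open.
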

 On en d\'eduit  le r\'esultat d\'eja connu (see \cite[Corollary  page 521]{Rie70}): 

\begin{corollary} Le groupe $(D^1,D^1)$ engendr\'e par les commutateurs de $D^1$ est  $D^1\cap U^1_D$. 
\end{corollary}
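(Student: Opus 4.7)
The plan is to show the stronger assertion that every $x\in D^1\cap U_D^1$ can be written in the form $[\omega,b_1]\cdot[p_D,b_2]$ with $b_1,b_2\in U_D^1$, by constructing the pair $(b_1,b_2)$ via successive approximation along the congruence filtration $\{U_D^i\}_{i\ge 1}$. First I would reduce to a density statement: since $D^1$ is compact, the continuous image of $U_D^1\times U_D^1$ under $(b_1,b_2)\mapsto [\omega,b_1][p_D,b_2]$ is closed in $D^1$, so it suffices to show that for every $N\ge 1$ one can find $b_1,b_2\in U_D^1$ with $[\omega,b_1][p_D,b_2]\equiv x\pmod{U_D^{N+1}}$.

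The key computation is at the level of the graded pieces $U_D^i/U_D^{i+1}$, identified additively with $k_D$ via $1+ep_D^i\mapsto \bar e$ for $e\in O_E$ (using that $E/F$ is unramified of degree $d$, so $k_E=k_D$). Using $p_D\omega p_D^{-1}=\omega^{q^r}$ one computes, modulo $U_D^{i+1}$,
\[
[\omega,1+ep_D^i]\;\equiv\;1+\alpha_i\,\bar e\,p_D^i,\qquad [p_D,1+ep_D^i]\;\equiv\;1+(\bar e^{q^r}-\bar e)\,p_D^i,
\]
where $\alpha_i\in k_D$ satisfies $\alpha_i\ne 0$ if and only if $d\nmid i$. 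Thus $[\omega,-]$ surjects onto $k_D$ at the off-diagonal levels $d\nmid i$, while $[p_D,-]$ is an Artin--Schreier-type map with image $\ker(\tr_{k_D/k_F})$. On the other hand, the expansion $\nrd(1+ep_D^i)\equiv 1+\trd(ep_D^i)$ modulo deeper terms, combined with $\trd(ep_D^i)=0$ for $d\nmid i$ and $\trd(ep_D^i)=p_F^{i/d}\tr_{E/F}(e)$ for $d\mid i$, shows that the image of $(D^1\cap U_D^i)/(D^1\cap U_D^{i+1})$ in $k_D$ is all of $k_D$ when $d\nmid i$ and exactly $\ker(\tr_{k_D/k_F})$ when $d\mid i$. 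So the commutator with $\omega$ exactly matches the off-diagonal levels and the commutator with $p_D$ the diagonal ones.

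With these matchings, one builds $b_1,b_2$ inductively: starting from $b_1^{(0)}=b_2^{(0)}=1$, at the step from $N$ to $N+1$ one modifies $b_1$ by a factor in $U_D^{N+1}$ (when $d\nmid N+1$) or $b_2$ by a factor in $U_D^{N+1}$ (when $d\mid N+1$), using the appropriate surjection above to kill the defect at level $N+1$. Such modifications only affect the commutators at levels $\ge N+1$, so previously matched levels remain matched. By compactness of $U_D^1$, the Cauchy sequences $b_1^{(N)},b_2^{(N)}$ converge to the desired $b_1,b_2$.

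The main obstacle is the bookkeeping of the higher-order terms in the commutator expansions: one must check that at each inductive step they lie in strictly deeper levels than the one currently being corrected, and that the graded surjectivities coming from $[\omega,-]$ and $[p_D,-]$ indeed land inside the image of $(D^1\cap U_D^i)/(D^1\cap U_D^{i+1})$ in $k_D$, so that the required adjustments can be chosen to remain in $D^1$. Once this verification is in place the induction runs, and the compactness reduction from the first paragraph yields the theorem.
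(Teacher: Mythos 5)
Your proposal reproves the preceding \emph{Theorem} by successive approximation, which is indeed the right global strategy (and essentially the paper's: Nakayama--Matsushima-style approximation along the filtration $U_D^i$, with one conjugating element handling the levels $d\nmid i$ and another the levels $d\mid i$, plus a compactness/closedness argument to pass to the limit). However, there is a genuine gap: you conjugate by $\omega$ and by $p_D$, and neither of these lies in $D^1$. One has $\nrd(\omega)=\omega^{(q^d-1)/(q-1)}$, a root of unity of order $q-1$, and $\nrd(p_D)=(-1)^{d-1}p_F$, so $\omega,p_D\notin D^1$. Thus $[\omega,b_1]$ and $[p_D,b_2]$, while they do land in $D^1$ (since $D^1$ is normal), are \emph{not} commutators of elements of $D^1$; your construction shows $D^1\cap U_D^1\subset (D^*,D^*)$, which is vacuous since $(D^*,D^*)=D^1$. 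The content of the Theorem is precisely that one can choose the conjugating elements inside $D^1$: the paper takes $z$ a root of unity of order $(q^d-1)/(q-1)$ (so $\nrd(z)=1$) and $t=(1+p_D)u^{-1}$ with $u\in 1+P_E$ chosen so that $\nrd(u)=\nrd(1+p_D)$ (so $t\in D^1$, $t\equiv 1+p_D\bmod U_D^2$). Fixing your argument therefore requires replacing $\omega$ by $z$, and here the surjectivity at levels $d\nmid i$ is no longer automatic: since $z$ has the smaller order $(q^d-1)/(q-1)$, one must verify $z^{q^{ri}}\ne z$, which is exactly why the paper proves the inequality $q^{d-1}-1<(q^d-1)/(q-1)$. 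Replacing $p_D$ by $t$ similarly changes the commutator computation to $[1+p_D,\,1+xp_D^{i-1}]$ as in the paper.

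A second, more minor omission: the corollary asserts an equality $(D^1,D^1)=D^1\cap U_D^1$, while your argument (once repaired) gives only the inclusion $D^1\cap U_D^1\subset(D^1,D^1)$. The reverse inclusion should still be recorded: $\varrho$ identifies $D^1/(D^1\cap U_D^1)$ with $k_D^1$, which is cyclic, hence abelian, so $(D^1,D^1)\subset D^1\cap U_D^1$.
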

\begin{proof}[Preuve] Par le th\'eor\`eme, $ D^1\cap U^1_D \subset (D^1, D^1)$. Par l'application quotient $\varrho : O_D \to k_D$,  $D^1/(D^1\cap U^1_D)$ s'identifie au noyau $k^1_D$ de la norme de $k_D^*$ vers $k_F^*$,  et $k_D^1$ est cyclique. Donc $(D^1, D^1)\subset D^1\cap U^1_D$.  \end{proof}

\begin{remark} 
Si $H$ est un $F$-groupe simplement connexe et isotrope alors $H(F) = (H(F),H(F))$  by \cite[6.15]{PR84}. 
\end{remark}
Nous d\'emontrons maintenant le th\'eor\`eme\footnote{Nakayama and Matsushima \cite{NM43} ont montr\'e que chaque \'el\'ement of $D^1$ est le produit de trois \'el\'ements of $D^*$. Nous nous inspirons de leur preuve.}.
\begin{proof}[Preuve]
On fixe une    racine de l'unit\'e $z\in E$ d'ordre $(q^d-1)/(q-1)$.  
Pour $i>0$ et $x\in O_D$, on a $z (1+xp_D^i)z^{-1}= 1+yp_D^i$ avec $y\in O_D$ et $\rho(y)=\rho(z) \rho(z)^{-q^{ri}} \rho(x)$.

\medskip a) Supposons que $d$ ne divise pas $i$. Alors les \'el\'ements $t\in k_D$ tels que $t^{q^{ri}}=t$ forment une sous-extension de $k_D/k_F$ de degr\'e $e<d$. Comme \begin{equation} q^{d-1}-1 < (q^d-1)/(q-1), 
\end{equation}
car $(q-1)(q^{d-1}-1) < (q-1)q^{d-1}<q^d-q^{d-1} < q^d-1$, on a  $z^{q^{ri}}\neq z$. Donc 
 pour $s\in O_D$ il existe  $x\in O_D$ tel que le commutateur $(z, 1+xp_D^i)$ est $ 1+sp_D^i $ modulo $P_D^{i+1}$. 
 
 La norme r\'eduite envoie $U_D$ dans $U_F$, et $U^i_D $ dans $U^i_D \cap U_F$. 
Il est bien connu que $\nrd (U^i_E)=U^i_F$ pour $i\geq 0$ \cite[Chapitre V Proposition 1]{S68}. On en d\'eduit que l'on a 
\begin{equation}\label{eq:iso0}\nrd(U^i_D)=U^k_F \ \ \ \text{si $i=dk-j$ avec $0\leq j\leq d-1$}. 
\end{equation}
Il s'ensuit  que l'injection de $D^1$ dans $D^*$ induit pour $i>0$, un isomorphisme  
\begin{equation}\label{eq:iso} (D^1\cap U^i_D)/(D^1\cap  U^{i+1}_D)\to  U^i_D/U^{i+1}_D.
\end{equation}
Pour $u\in D^1 \cap U^i_D$ tel que $u \equiv 1+xp_D^i$ modulo $p_D^{i+1}$,  le commutateur $(z,u)$ est  $ 1+sp_D^i $ modulo $P_D^{i+1}$. 

\medskip b) Supposons que $i=kd$. Pour $x\in O_D$, le commutateur $(1+p_D, 1+xp_D^{i-1}) $ est $1+yp_D^i$ avec $\rho(y) =\rho(x)^{q^r}-\rho(x)$ \footnote{On calcule $(1+a)(1+b)(1+a')(1+b')$ avec $a+a'+aa'=b+b'+bb'=0$, en 
n\'egligeant les termes avec deux $a$ et un $b$, ou un $a$ et deux $b$. On  a $(1+a)(1+b)(1+a')=1+(1+a)b  (1+a')=1+b+ab+ba'=1+b+ab-ba$ et $(1+ b+ab-ba)(1+b')=1+ab-ba$. On prend  $a=p_D, b= xp_D^{i-1}$.} Tout \'el\'ement de $k_D$ de trace nulle dans $k_F$ est de la forme $t^{q^r}-t$ pour $t\in k_D$ \footnote{L'application lin\'eaire $t\mapsto t^{q^r}-t: k_D\to k_D$ de noyau  $k_F$ car $(r,d)=1$, a son image  contenue dans le noyau de la trace  $k_D\to k_F$ qui est  surjective.}. Pour $y \in O_D$ avec $ \rho(y) $ de trace nulle dans $k_F$, il existe donc $x\in O_D$  tel que $(1+p_D, 1+xp_D^{i-1})\equiv  1+yp_D^i $ modulo $p_D^{i+1}$.

\medskip c) Le polyn\^ome caract\'eristique r\'eduit de $p_D$ est $X^d-p_F$. Celui de $1+p_D$ est donc
$(X-1)^d-p_F$, donc  $\nrd(1+p_D)= 1+(-1)^{d-1}p_F$. Il existe $u\in 1+P_E$ tel que $\nrd(u)=\nrd(1+p_D)$. Fixons $u$ et posons $t= (1+p_D) u^{-1}$. Alors $ t\in D^1$ et $t\equiv 1+p_D$ modulo $p_D^2$.

Nous allons montrer (adaptant la m\'ethode de \cite{NM43}) par approximations successives : 
\begin{equation}\text{Pour tout $a\in D^1\cap U_D^1$, il existe $b,c\in D^1$ tels que $a=(z,b)(t,c)$.}
\end{equation}
Pour $i>0$ supposons trouv\'es $b_i,c_i \in D^1\cap U_D^1$ tels que 
$a\equiv (z,b_i)(t,c_i)$ modulo $U^i_D$. Pour $i=1$ on prend $b_i=c_i=1$. 

Si $ i $ n'est pas multiple de $d$ et $a=u(z,b_i)(t,c_i) $ pour  
$u\in  D^1 \cap U^i_D$, par a) il existe $v\in  D^1 \cap U^i_D$ tel que $(z,v)\equiv u$ modulo $U^{i+1}_D$. Comme  $(z,vb_i)=(z,v)(v,(z,b_i))(z,b_i)$ et  $(v,(z,b_i))\in U^{i+1}_D$, on a 
$a\equiv  (z,vb_i)(t,c_i)$ modulo  $U^{i+1}_D$. On pose $b_{i+1}=vb_i , c_{i+1}=c_i$.

Si $i=kd$ pour un entier $k>0$ et $a=(z,b_i)(t,c_i)u$ pour  $ u\in  D^1 \cap U^i_D$, par b) il existe $v\in  D^1\cap  U^{i-1}_D$ tel que $u\equiv (t,v) $ modulo $U^{i+1}_D$.
Donc $a\equiv (z,b_i)(t,c_i)(t,v) $modulo $U^{i+1}_D$. Comme  $(t,c_iv)=(t,c_i)(c_i,(t,v))(t,v)$ et  $ (c_i,(t,v))\in  U^{i+1}_D$, on a 
$a\equiv (z,b_i)(t,c_iv)$ modulo $U^{i+1}_D$. On pose alors $b_{i+1}=b_i ,c_{i+1}=c_iv$.
Les suites $(b_i)$ et $(c_i)$ convergent, vers $b$ et $c$ respectivement, et \`a la limite on obtient $a=(z,b)(t,c)$.  
\end{proof}

 \section{Appendice:  Le cas $R=\mathbb F_p^{ac}, F = \mathbb Q_p$. }\label{modp}

 Les r\'esultats  de \S \ref{ss:31} implique lorsque $R=\mathbb F_p^{ac}$ et $F=\mathbb Q_p$ la proposition suivante.
 
\begin{proposition}\label{pro:D}  Il existe des  $\mathbb F_p^{ac}$-repr\'esentations irr\'eductibles lisses  non \'equivalentes  
$\Pi_0, \ldots, \Pi_{p-1}$ de $D^*$ et $\pi_0, \ldots, \pi_{p-1}$ de $D^1$ telles que:

1)  $\Pi_r = \ind_{\mathbb Q_p^* U_D}^{D^*}\lambda_r , \ \ \   \ \ 0\leq r \leq p-1,$ 

\noindent o\`u  $\lambda_r$ est un  $\mathbb F_p^{ac}$-caract\`ere de $U_D$ de restriction $\pi_r= \pi_0^{r+1}$  \`a $ D^1$, pour un plongement $\pi_0$ de $k_D^1$ dans $(\mathbb F_p^{ac})^*$ relev\'e \`a $D^1$,  que l'on \'etend   \`a $\mathbb Q_p^*U_D$  en envoyant $p$ sur l'identit\'e.

2)  Les $\mathbb F_p^{ac}$-repr\'esentations irr\'eductibles lisses de $D^1$ non triviales sont  les caract\`eres $\pi_r$ pour $0\leq r \leq p-1$. 
 
3) $ \Pi_r |_{D^1}\simeq  \pi_r \oplus \pi_{p-1-r}, \ \ 0\leq r \leq p-1 $ (car $\pi_r^p=\pi_0^{(r+1)p}=\pi_0^{p-r}=\pi_{p-1-r}$).
\end{proposition}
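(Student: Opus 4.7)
The plan is to specialize the analysis of \S\ref{ss:30}--\ref{ss:31} to $R = \mathbb{F}_p^{ac}$ and $F = \mathbb{Q}_p$ (so $q = p$), and to read the proposition off from the explicit combinatorics of characters of $k_D^1$. I will assume $p$ odd throughout; for $p=2$ all extensions are forced and the two representations $\Pi_0,\Pi_1$ become equivalent.

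First I would establish (2). By part 2 of the main theorem, every irreducible smooth $\mathbb{F}_p^{ac}$-representation of $D^1$ is a character factoring through $\rho_D : D^1 \to k_D^1$, and $k_D^1$ is cyclic of order $p+1$. Since $p+1$ is coprime to $p$, the group $(\mathbb{F}_p^{ac})^*$ contains an embedded copy of $k_D^1$; fix such an embedding $\pi_0$. Its $p$ non-trivial powers $\pi_r := \pi_0^{r+1}$, $0 \leq r \leq p-1$, then exhaust the non-trivial characters of $D^1$.

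For (1) and (3), I would construct each $\Pi_r$ explicitly and invoke Lemma \ref{le:mr}. Using divisibility of $(\mathbb{F}_p^{ac})^*$, I extend $\pi_r$ to a character $\nu_r$ of $k_D^*$, pull back via $\rho_D$ to $U_D$, and extend to $\mathbb{Q}_p^* U_D$ by sending $p$ to $1$; this yields $\lambda_r$. Since $\nu_r|_{k_D^1}=\pi_r$ is non-trivial, $\nu_r$ is regular ($\nu_r^q \neq \nu_r$), so $\Pi_r := \ind_{\mathbb{Q}_p^* U_D}^{D^*} \lambda_r$ is irreducible by the construction of \cite{V87} recalled in \S\ref{ss:30}. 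Lemma \ref{le:mr} then gives $\Pi_r|_{D^1} \simeq \pi(\nu_r) \oplus \pi(\nu_r^q) = \pi_r \oplus \pi_r^p$, and using $\pi_0^{p+1}=1$ the computation $\pi_r^p = \pi_0^{p(r+1)} = \pi_0^{p-r} = \pi_{p-1-r}$ finishes (3).

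The only delicate step is the pairwise non-equivalence of $\Pi_0,\dots,\Pi_{p-1}$. Since $D^* = \mathbb{Q}_p^* U_D \sqcup p_D \mathbb{Q}_p^* U_D$, Mackey yields $\Pi_r \simeq \Pi_s$ iff $\lambda_s \in \{\lambda_r,\, {}^{p_D}\lambda_r\}$. The first option forces $r=s$ by restriction to $D^1$; the second forces $s = p-1-r$ (via $\pi_s = \pi_r^p = \pi_{p-1-r}$) together with the additional equality $\nu_{p-1-r} = \nu_r^p$ on all of $k_D^*$. For $p$ odd with $r \neq (p-1)/2$, the $|k_F^*| = p-1$ extensions of $\pi_r$ and of $\pi_{p-1-r}$ can be chosen independently, so one selects them so that $\nu_{p-1-r} \neq \nu_r^p$; for $r = (p-1)/2$ the condition $s = r$ is vacuous. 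With such a coherent choice of extensions, the $\Pi_r$ are pairwise inequivalent, and the proposition follows.
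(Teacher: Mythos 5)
The paper does not actually spell out a proof of Proposition \ref{pro:D}: it states only that "les r\'esultats de \S\ref{ss:31}" (meaning the analysis of the tame case in \S3.2, culminating in Lemmes \ref{le:mr} and \ref{le:2}) imply it. Your write-up supplies the details that the paper leaves implicit, and you follow essentially the same route: part~2) of the main theorem gives (2); the explicit tame construction $\Pi \simeq \ind_{F^*U_D}^{D^*}\lambda$ with $\lambda|_{U_D}=\nu\circ\rho_D$ regular, plus Lemme \ref{le:mr}, give (1) and (3). Your computations ($\nu_r$ regular because $\pi_r$ nontrivial on $k_D^1$; $\pi_r^p=\pi_{p-1-r}$ from $\pi_0^{p+1}=1$) are correct.

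The genuine addition in your proof is the inequivalence of the $\Pi_r$. You correctly observe via Mackey (with $[D^*:\mathbb{Q}_p^*U_D]=2$) that $\Pi_r\simeq\Pi_s$ with $r\neq s$ would force $s=p-1-r$ together with $\nu_{p-1-r}=\nu_r^q$, and that whether this happens depends on the choice of the $p-1=|k_F^*|$ extensions of $\pi_r$ to $k_D^*$. This is a real subtlety that the paper silently passes over: for instance, the ``obvious'' coherent choice $\nu_r=\nu^{r+1}$ for a fixed faithful character $\nu$ of $k_D^*$ actually gives $\nu_{p-1}=\nu_0^q$, hence $\Pi_0\simeq\Pi_{p-1}$, so a na\"{i}ve reading does \emph{not} produce $p$ pairwise inequivalent $\Pi_r$. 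Your remedy---choosing the two extensions in each pair $\{r,p-1-r\}$ with $r\neq(p-1)/2$ independently so that $\nu_{p-1-r}\neq\nu_r^q$ (possible as soon as $p-1\geq 2$), and noting that the case $r=(p-1)/2$ is vacuous---is correct, and it explains why the claim degenerates for $p=2$, where the extension is unique and $\Pi_0\simeq\Pi_1$; your upfront caveat restricting to $p$ odd is therefore well taken (and, implicitly, also needed for the statement as printed). In short: your proof is correct for $p$ odd, fills in the non-equivalence step that the paper's one-line justification glosses over, and otherwise matches the paper's intended argument.
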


  Ceci rappelle des  r\'esultats analogues pour  $GL_2(\mathbb Q_p)$   et  $SL_2(\mathbb Q_p)$  \cite[Th\'eor\`eme 4.12]{Abde14}.
Une $\mathbb F_p^{ac}$-repr\'esentation irr\'eductible  supercuspidale de $GL_2(\mathbb Q_p)$ ou de $SL_2(\mathbb Q_p))$ est appel\'ee supersinguli\`ere. On note  $ \Sym^r (\mathbb F_p^2) $    la repr\'esentation de $GL_2(\mathbb Z_p)$    \'etendue \`a $K={\mathbb Q_p}^* GL_2(\mathbb Z_p)$ en envoyant $p$ sur l'identit\'e.   

\begin{proposition}\label{pro:GL} Il existe   des repr\'esentations supersinguli\`eres non \'equivalentes
$\Pi_0, \ldots, \Pi_{p-1}$ de $GL_2(\mathbb Q_p)$ et $\pi_0, \ldots, \pi_{p-1}$ de $SL_2(\mathbb Q_p)$  telles que:

1) $\Pi_r = (\ind_{K}^{GL_2(\mathbb Q_p)}\Sym^r (\mathbb F_p^2)) / T , \ \ \   \ \ 0\leq r \leq p-1,$ 

\noindent   est la repr\'esentation conoyau d'un certain endomorphisme $T$ de $\ind_{K}^{GL_2(\mathbb Q_p)} \Sym^r (\mathbb F_p^2) $.  

2)  Une repr\'esentation supersinguli\`ere de $SL_2(\mathbb Q_p)$ est  isomorphe \`a une repr\'esentation $\pi_r$ pour un unique $0\leq r \leq p-1$.

3)  $  \Pi_r |_{SL_2(\mathbb Q_p)}\simeq  \pi_r \oplus \pi_{p-1-r}, \ \ 0\leq r \leq p-1. $ 

\end{proposition}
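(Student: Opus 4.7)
The plan is to deduce Proposition~\ref{pro:GL} by combining the Barthel--Livn\'e--Breuil classification of supersingular $\mathbb F_p^{ac}$-representations of $GL_2(\mathbb Q_p)$ with the mod-$p$ restriction analysis carried out in \cite[Th\'eor\`eme 4.12]{Abde14}. The structure parallels the proof of Proposition~\ref{pro:D}: first exhibit $p$ explicit supersingular representations of the ambient group, check pairwise inequivalence, and then analyse their restriction to the derived subgroup.

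For part~1), I would invoke Breuil's theorem: the spherical Hecke algebra acting on $\ind_K^{GL_2(\mathbb Q_p)} \Sym^r(\mathbb F_p^2)$ is polynomial in a single operator $T$, and the cokernel $\Pi_r$ of $T$ is irreducible supersingular with trivial central character for each $0\le r\le p-1$. Pairwise inequivalence is witnessed by the $GL_2(\mathbb Z_p)$-socle, which is $\Sym^r(\mathbb F_p^2)$; these weights are pairwise non-isomorphic. For part~2), every smooth irreducible $\mathbb F_p^{ac}$-representation of $SL_2(\mathbb Q_p)$ appears in the restriction of a smooth irreducible representation of $GL_2(\mathbb Q_p)$ (the same finite-index extension argument as in \S\ref{ss:30}); a supersingular one cannot arise from a principal series (by a Jacquet functor argument), so it must come from some $\Pi_r$, and twists of $\Pi_r$ by $\chi\circ\det$ become trivial after restriction to $SL_2(\mathbb Q_p)$.

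The core step, part~3), is the explicit decomposition $\Pi_r|_{SL_2(\mathbb Q_p)} \simeq \pi_r\oplus \pi_{p-1-r}$. Following Abdellatif, I would compute $\End_{SL_2(\mathbb Q_p)}(\Pi_r) \simeq \End_{\mathbb Q_p^* SL_2(\mathbb Q_p)}(\Pi_r)$ via the isomorphism $\End_{\mathbb Q_p^* SL_2(\mathbb Q_p)}(\Pi_r) \simeq \Hom_{GL_2(\mathbb Q_p)}(\Pi_r, \Pi_r\otimes \ind_{\mathbb Q_p^* SL_2(\mathbb Q_p)}^{GL_2(\mathbb Q_p)} 1)$, the last space being a direct sum, over characters $\chi$ of $\mathbb Q_p^*/(\mathbb Q_p^*)^2$, of the spaces $\Hom_{GL_2(\mathbb Q_p)}(\Pi_r, \Pi_r\otimes \chi\circ\det)$. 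Conjugating $\ind_K^{GL_2(\mathbb Q_p)} \Sym^r(\mathbb F_p^2)$ by $\diag(p,1)$ and passing through the Hecke quotient produces a second component isomorphic to $\pi_{p-1-r}$; when $r=(p-1)/2$ the two labels coincide and the representation $\pi_{(p-1)/2}$ appears with multiplicity $2$ in $\Pi_r|_{SL_2(\mathbb Q_p)}$.

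The main obstacle is the matching of the labellings between the parameter $r$ of $\Pi_r$ and the labels of the $SL_2(\mathbb Q_p)$-components, i.e.\ identifying the second component precisely as $\pi_{p-1-r}$ rather than some other $\pi_{r'}$; this boils down to a careful tracking of the $SL_2(\mathbb Z_p)$-socle through the Hecke quotient, and the correct enumeration in the case $r=(p-1)/2$, which is the content of \cite[Th\'eor\`eme 4.12]{Abde14}.
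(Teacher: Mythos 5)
The paper offers no proof of Proposition~\ref{pro:GL}: it is explicitly introduced as a \emph{rappel} of Abdellatif's Théorème 4.12, and the entire content is delegated to that reference. Your proposal is consistent with this: you cite the same result and, reasonably, sketch how the Barthel--Livn\'e--Breuil construction of $\Pi_r$ and the restriction analysis of \cite{Abde14} combine to give parts 1)--3). In that sense your approach is the same as the paper's, just with more exposition.

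Two technical points in your sketch deserve caution, however, if you wanted to turn it into an actual proof rather than a citation. First, the decomposition of $\ind_{\mathbb Q_p^* SL_2(\mathbb Q_p)}^{GL_2(\mathbb Q_p)} 1$ as a direct sum of characters $\chi\circ\det$ is valid for $p$ odd, where $|\mathbb Q_p^*/(\mathbb Q_p^*)^2|=4$ is prime to $p=\charf_R$; for $p=2$ the quotient is a $2$-group of order $8$, the group algebra over $\mathbb F_2^{ac}$ is local rather than semisimple, and the induced representation does not split into characters --- so the endomorphism-algebra computation via a sum over $\chi$ breaks down. (This is exactly the obstruction the paper runs into in \S\ref{ss:31} when $\charf_R=2$ for $D^1$, where they pass to the explicit construction instead.) Second, the "finite-index extension argument as in \S\ref{ss:30}" does not transpose directly: that argument uses that $\Pi$ is trivial on $1+P_D^i$ and hence finite-dimensional, which is specific to the compact-mod-centre group $D^*$; for $GL_2(\mathbb Q_p)$ and $SL_2(\mathbb Q_p)$ one needs the different argument of \cite{Abde14}/\cite{HV25} relying on invariants under a compact open subgroup. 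Neither gap is fatal since, as noted, the statement here ultimately rests on \cite[Th\'eor\`eme 4.12]{Abde14}; but it is worth being aware that the clean character-theoretic computation you outline does not carry over uniformly to $p=2$.
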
  

La repr\'esentation a multiplicit\'e $1$ sauf si  $p$ est impair et $r= (p-1)/2$ o\`u elle est $2$.

\bigskip On comparera ces propositions avec les r\'esultats de   \cite{BS25}. La restriction \`a $SL_2(\mathbb Q_p)$ d'une  repr\'esentation de Banach unitaire $p$-adique irr\'eductible  $\Pi$  of $GL_2(\mathbb Q_p)$ sur une extension finie $E/\mathbb Q_p$, associ\'ee par la correspondance 
locale de  Langlands $p$-adique, \`a une repr\'esentation galoisienne $\sigma_\Pi$ absolument  irr\'eductible continue de dimension $2$,  est une 
somme directe  de  $s\leq 2$ repr\'esentations irr\'eductibles.
Elle est sans multiplicit\'e et  $s$ est  le cardinal  $S$  du centralisateur dans  $PGL_2$ de la repr\'esentation projective galoisienne   associ\'ee \`a  $\sigma_\Pi$  sauf si 
 $\sigma_\Pi$ est triplement imprimitive auquel cas $S= 4$  et
 $\Pi|_{\SL_2(\mathbb Q_p)}$ est 
somme directe de  deux repr\'esentations  irr\'eductibles \'equivalentes. 

 \def\refname{R\'ef\'erences}
    
    \end{document}